\newtheorem{thm}{Theorem}
\newtheorem{lem}[thm]{Lemma}
\newtheorem{dfn}[thm]{Definition}
\numberwithin{equation}{section}
\newdimen\Squaresize \Squaresize=14pt
\newdimen\Thickness \Thickness=0.4pt
\def\Square#1{\hbox{\vrule width \Thickness
   \vbox to \Squaresize{\hrule height \Thickness\vss
      \hbox to \Squaresize{\hss#1\hss}
   \vss\hrule height\Thickness}
\unskip\vrule width \Thickness} \kern-\Thickness}
\def\Vsquare#1{\vbox{\Square{$#1$}}\kern-\Thickness}
\title{Sums of products of binomial coefficients mod $2$
  and $2$-regular sequences}
\author{Narad Rampersad and Max Wiebe\footnote{
Department of Math/Stats,
University of Winnipeg,
515 Portage Ave.,
Winnipeg, MB, R3B 2E9
Canada; {\tt narad.rampersad@gmail.com}.}}
\begin{document}
\maketitle
\begin{abstract}
  Wu showed that certain sums of products of binomial coefficients
  modulo $2$ are given by the run length transforms of several famous
  linear recurrence sequences, such as the positive integers, the
  Fibonacci numbers, the extended Lucas numbers, and Narayana's cows
  sequence.  In this paper we show that the run length transform of
  such sequences are $2$-regular sequences.  This allows us to obtain
  Wu's results and some new ones using the computer program \texttt{Walnut},
  eliminating the need for long technical proofs.
\end{abstract}

\section{Introduction}
Wu \cite{Wu22} recently studied sums of the form
\begin{equation} \label{T(n)}
    T(n) = \sum^n_{k=0} \left[\dbinom{a_1n+a_2k}{a_3n+a_4k}\dbinom{n}{k}\pmod{2}\right],
\end{equation}
where $a_1+a_2 \geq 0$ and $a_3+a_4 \geq 0$.  He showed that for certain values of $a_1, a_2, a_3, a_4$,
the sequence defined by \eqref{T(n)} can be obtained as the run length transform of
a famous linear recurrence sequence, such as the positive integers, the
Fibonacci numbers, the extended Lucas numbers, or Narayana's cows
sequence.

The run length transform is a operation on integer sequences first introduced by
Sloane~\cite{Slo18}.
\begin{dfn} \label{Dfn2}
    The \emph{run length transform sequence} $(T(n))_{n\geq0}$ of a sequence $(S(n))_{n\geq0}$ is given by:
    \begin{equation} \label{dfn2eq1}
        T(n) = \sum_{i \in \mathcal{L}(n)}S(i)
    \end{equation}
    where $\mathcal{L}(n)$ is the list of the lengths of all maximal runs of $1$'s (with repetitions) in $[n]_2$,
    the binary representation of $n$. 
\end{dfn}
For example, if $n = 11$, then $[n]_2 = 1011$. So $\mathcal{L}(11)$ = $\{1, 2\}$ and $T(11) = S(1)S(2)$. 

Although they did not not state it this way, Sloane~\cite{Slo18} and Wu~\cite{Wu22}
showed that if $(S(n))_{n\geq0}$ is a linear recurrence sequence, then its run length transform
$(T(n))_{n\geq0}$ is a $2$-regular sequence.  This is a class of sequences with a deep theory
(see \cite{AS03}).  Furthermore, the computer package \texttt{Walnut} can be used to perform various computations
involving these sequences.  Our goal is to show how to use \texttt{Walnut} to obtain the results of Wu,
as well as some new ones.

\section{$2$-regular sequences and \texttt{Walnut}}\label{sec1}
Next we define the class of $2$-regular sequences.  We will give two equivalent definitions.
The first is the one used (implicitly) by Wu~\cite{Wu22} and the second is the one we will use
in the rest of this paper.

Let $(a(n))_{n \geq 0}$ be an integer sequence.  We define the \emph{$2$-kernel} of $a$ to be
the following set of subsequences:
\[
\mathcal{K}_2(a) = \{(a(2^in+j))_{n \geq 0} \;:\; i \geq 0;\;0 \leq j < 2^i\}.
\]
If there is a finite subset $R \subseteq \mathcal{K}_k(a)$ such that every sequence in
$\mathcal{K}_k(a)$ can be written as a linear combination over $\mathbb{Z}$ of
sequences in $R$, then $a$ is a \emph{$2$-regular sequence}.

For explicit calculation, the following equivalent definition may be more useful.
Consider a triple $(v,\gamma,w)$, where
\begin{itemize}
\item $v \in \mathbb{Z}^d$ is a row vector;
\item $w \in \mathbb{Z}^d$ is a column vector; and,
\item $\gamma:\{0,1\}^* \to \mathbb{Z}^{d \times d}$, is a homomorphism from the set of binary words
to the set of $d \times d$ integer matrices (that is, if $w = w_m w_{m-1}\cdots w_1$ is a binary
word, then $\gamma(w) = \gamma(w_m)\gamma(w_{m-1})\cdots\gamma(w_1)$).
\end{itemize}
Note that $\gamma$ is uniquely determined by the two matrices $\gamma(0)$ and $\gamma(1)$,
so from now on, we will instead write $(v,\gamma(0),\gamma(1),w)$ rather than $(v,\gamma,w)$.
The quadruple $(v,\gamma(0),\gamma(1),w)$ is a \emph{linear representation} for $a$ if, for all $n\geq 0$, we have
$a(n) = v\gamma([n]_2)w$, where $[n]_2$ is the binary representation of $n$.  The quantity $d$ is
the \emph{rank} of the linear representation.  If $a$ has a linear representation, then $a$ is
a \emph{$2$-regular sequence}.  The equivalence between this definition and the previous one
can be found in \cite[Theorem~16.1.3]{AS03}.

Any sum of the form \eqref{T(n)} defines a $2$-regular sequence and we can obtain a linear
representation for it using the computer package \texttt{Walnut}.  The key idea behind this comes from
the classical theorem of Lucas:
for integers $k$, $n$, and prime $p$, the following holds:
\begin{equation} \label{lem2eq1}
    \dbinom{n}{k}\equiv \prod_{i=1}^m\dbinom{n_i}{k_i}\pmod{p}
\end{equation}
where $[n]_p = n_m n_{m-1}\cdots n_1$ and $[k]_p = k_m k_{m-1} \cdots k_1$ are the base-$p$ expansions of
$n$ and $k$ respectively (if necessary, the shorter of the two base-$p$ expansions is padded with $0$'s on
the left so that both expansions have the same length).  Furthermore, we use the convention that
$\dbinom{n}{k}=0$ if $n<k$.

In this paper we will only consider $p=2$.  Note that in this case we have 
\begin{equation} \label{lem2eq2}
    \dbinom{n_i}{k_i} \equiv
    \begin{cases} 
        0 \pmod{2} & \text{if } n_i = 0,\; k_i = 1, \\
        1 \pmod{2} & otherwise.
    \end{cases}
\end{equation}
It follows that $\dbinom{n}{k}\pmod{2} \equiv 0\pmod{2}$ if and only if there exists $i$ such that
$[k_i,n_i]=[1,0]$.  This condition can be checked by the finite automaton given in Figure~\ref{binom2-aut}.
\begin{figure}[H]
\begin{center}
\includegraphics[width=3.5in]{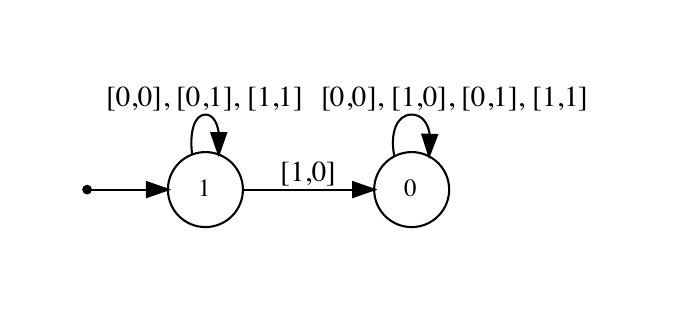}
\end{center}
\vskip -.4in
\caption{Automaton for $\dbinom{n}{k}$ modulo $2$}
\label{binom2-aut}
\end{figure}
This automaton reads pairs of digits $[k_i,n_i]$ and remains in state $1$ if no $[1,0]$ is seen;
otherwise, the automaton transitions to state $0$ and stays there once a $[1,0]$ is read.

Given such an automaton, the program \texttt{Walnut} can prove many things about the sequence computed
by the automaton, and, what is important for our purposes, it can compute linear representations
for sequences of the form \eqref{T(n)} (see the book by Shallit~\cite{Sha22} and in particular
Chapter~9 for details on how to use this program).  The Walnut command we use to compute
linear representations for \eqref{T(n)} is
\begin{verbatim}
eval [Sequence Name] n "?msd_2 (k <= n) &
    BINOM2[a_1*n+a_2*k][a_3*n+a_4*k]=@1 &
    BINOM2[n][k]=@1":
\end{verbatim}
(here \texttt{BINOM2} refers to the automaton given in Figure~\ref{binom2-aut}).  The output
of this command is a Maple program containing the linear representation of the sequence
\eqref{T(n)} (i.e., the triple $(v,\gamma,w)$ such that $T(n)=v\gamma([n]_2)w$).
(In fact, what the \texttt{eval} command does in this case is return a linear representation
for the sequence that counts, as a function of $n$, the number of $k$'s for which
the expression in quotation marks evaluates to \texttt{TRUE}.)

It is important to note that the linear representation that \texttt{Walnut} generates
may not have minimal rank.  However, there is an algorithm due to Schutzenberger and
presented in the book of Berstel and Reutenauer \cite[Section~2.3]{BR11},
that will take a linear representation of a regular sequence and produce a new
representation of minimal rank.  When we refer to ``minimizing'' a linear
representation, we mean applying this algorithm\footnote{Jeffrey Shallit has kindly
provided us with a Maple implementation of the minimization algorithm.}.

\section{The run length transform of a linear recurrence sequence}
In order to obtain Wu's results, we need to make the connection between the linear
representations for \eqref{T(n)} computed in the previous section and the run length
transform of linear recurrence sequences.

\begin{dfn} \label{Dfn1}
    Let $(S(n))_{n\geq0}$ be a sequence defined by:
    \begin{equation} \label{dfn1eq1}
        S(n+1) = d_0S(n) + \dots + d_rS(n-r)
    \end{equation}
    with
    \[ S(i) =
    \begin{cases} 
        1 & \text{if } i=0 \\
        c_i & \text{if } i = 1,..,r 
    \end{cases}
    \]
\end{dfn}

We define
\begin{align*}
    v &=
    \begin{bmatrix}
        1 & 0 & \dots & 0
    \end{bmatrix}_{1\times(r+1)}, \quad\quad
    w =
    \begin{bmatrix}
        1 \\ c_1 \\ \vdots \\ c_r
    \end{bmatrix}, \\
    \gamma(0) &=
    \begin{bmatrix}
            1 & 0 & \dots & 0 \\
            c_1 & 0 & \dots & 0 \\
            \vdots & \vdots & \dots & \vdots \\
            c_r & 0 & \dots & 0
        \end{bmatrix}, \quad
     \gamma(1) =
    \begin{bmatrix}
        0 \\ \vdots & I_{r\times r} \\ 0 \\ d_r & \dots & d_0
    \end{bmatrix}
\end{align*}
We will prove that $(v, \gamma(0), \gamma(1), w)$ is a linear representation
for the run length transform $T(n)$ of $S(n)$.

For the purposes of the following two results, let
\begin{align*}
    w_n :=
    \begin{bmatrix}
        S(n) \\ S(n+1) \\ \vdots \\ S(n+r)
    \end{bmatrix}, \; \text{where} \;
    w_0 =
    \begin{bmatrix}
        S(0) \\ S(1) \\ \vdots \\ S(r)
    \end{bmatrix} =
    \begin{bmatrix}
        1 \\ c_1 \\ \vdots \\ c_r
    \end{bmatrix} = w
\end{align*}

\begin{lem} \label{Lem1}
    $S(n) = v\gamma(1)^nw$.
\end{lem}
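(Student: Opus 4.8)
The plan is to prove the identity $S(n) = v\gamma(1)^n w$ by induction on $n$, using the observation that the matrix $\gamma(1)$ is exactly the companion-type matrix that advances the window vector $w_n = (S(n), S(n+1), \ldots, S(n+r))^{T}$ by one step. That is, the heart of the argument is the single matrix identity
\begin{equation*}
    \gamma(1)\, w_n = w_{n+1}.
\end{equation*}
To see this, I would multiply out $\gamma(1) w_n$ row by row. The first $r$ rows of $\gamma(1)$ form the block $[\,0 \mid I_{r\times r}\,]$ (after the first column), so applying these rows to $w_n$ simply shifts the entries up: row $j$ (for $1 \le j \le r$) picks out $S(n+j)$, giving the first $r$ entries of $w_{n+1}$, namely $S(n+1), \ldots, S(n+r)$. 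The last row of $\gamma(1)$ is $[\,d_r\ \cdots\ d_0\,]$, so applying it to $w_n$ yields $d_r S(n) + d_{r-1} S(n+1) + \cdots + d_0 S(n+r)$, which by the recurrence \eqref{dfn1eq1} (with the index shifted up by $n$) equals $S(n+r+1)$. Hence $\gamma(1) w_n = w_{n+1}$, exactly as claimed.

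From here the induction is immediate. For the base case, $\gamma(1)^0 w = w = w_0$, and $v w_0 = S(0) = 1$ since $v$ picks off the first coordinate. For the inductive step, assuming $\gamma(1)^n w = w_n$, we get $\gamma(1)^{n+1} w = \gamma(1) w_n = w_{n+1}$ by the matrix identity above, and therefore $v \gamma(1)^{n+1} w = v w_{n+1} = S(n+1)$, again because $v = [\,1\ 0\ \cdots\ 0\,]$ extracts the top entry. This establishes $v\gamma(1)^n w = S(n)$ for all $n \ge 0$.

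The only genuine subtlety is bookkeeping around the first column of $\gamma(1)$ and the precise shape of the block structure: one must check that the entry of $\gamma(1)$ in position $(j,1)$ is $0$ for $1 \le j \le r$ so that the ``shift-up'' description of the first $r$ rows is literally correct, and that the indexing of the companion row $[\,d_r\ \cdots\ d_0\,]$ matches the recurrence \eqref{dfn1eq1} as written. Neither of these is hard, but getting the indices exactly right is where a careless proof would go wrong. I would therefore state and verify the identity $\gamma(1) w_n = w_{n+1}$ explicitly as the single lemma-within-the-proof, and let the induction follow in two lines.
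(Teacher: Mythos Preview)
Your proof is correct and follows essentially the same approach as the paper: both establish the key identity $\gamma(1)w_n = w_{n+1}$ by direct multiplication (shift rows from the identity block, recurrence from the companion row), then iterate to get $\gamma(1)^n w = w_n$ and read off $S(n)$ via $v$. The paper writes the iteration as a chain of equalities rather than a formal induction, but the content is identical.
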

\begin{proof}
    Let $n \geq$ 1. Observe:
    \begin{align*}
        \gamma(1)w_n &=
        \begin{bmatrix}
            0 \\ \vdots & I_{r\times r} \\ 0 \\ d_r & \dots & d_0
        \end{bmatrix}
        \begin{bmatrix}
            S(n) \\ S(n+1) \\ \vdots \\ S(n+r)
        \end{bmatrix} \\
        &=
        \begin{bmatrix}
            S(n+1) \\ S(n+2) \\ \vdots \\ d_rS(n) + d_{r-1}S(n+1) + \dots + d_0S(n+r)
        \end{bmatrix} \\ &= w_{n+1}.
    \end{align*}
    Then
    \begin{equation} \label{lem1eq1}
        \gamma(1)^nw = \gamma(1)^{n-1}(\gamma(1)w) = \gamma(1)^{n-1}w_1 = \dots = w_n.
    \end{equation}
    Therefore,
    \begin{align*}
        v\gamma(1)^nw = vw_n =
        \begin{bmatrix}
            1 & 0 & \dots & 0
        \end{bmatrix}
        \begin{bmatrix}
            S(n) \\ S(n+1) \\ \vdots \\ S(n+r)
        \end{bmatrix} = S(n).
    \end{align*}
\end{proof}

\begin{thm} \label{Thm1}
    The run length transform $T(n)$ of the linear recurrence sequence $S(n)$ is
    a $2$-regular sequence.  In particular, we have $T(n) = v\gamma([n]_2)w$.
\end{thm}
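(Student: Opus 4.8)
The plan is to evaluate $v\gamma([n]_2)w$ directly, using the special structure of $\gamma(0)$ and $\gamma(1)$. Lemma~\ref{Lem1} already describes $\gamma(1)$ on the relevant vectors: $\gamma(1)^kw = w_k$ and $vw_k = (w_k)_1 = S(k)$. The key complementary fact is that $\gamma(0)$ has rank one — every column but the first is zero, and the first column is $w$ — so $\gamma(0)x = x_1 w$ for every column vector $x$, where $x_1$ is its first entry. Since $(w_k)_1 = S(k)$ and $w_0 = w$, this yields the two identities I will use: $\gamma(0)w_k = S(k)\,w$ for all $k\ge 0$, and in particular $\gamma(0)w = S(0)\,w = w$ because $S(0)=1$. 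Intuitively, multiplying by $\gamma(1)$ lengthens the ``current run of $1$'s'' by one, while multiplying by $\gamma(0)$ closes that run off, emits the scalar factor $S(\ell)$ where $\ell$ is its length, and resets; a second or later $0$ inside a block of $0$'s then does nothing.

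Next I would decompose the binary word. For $n\ge 1$ write, uniquely,
\[
[n]_2 = 1^{\ell_1}0^{g_1}1^{\ell_2}0^{g_2}\cdots 1^{\ell_{t-1}}0^{g_{t-1}}1^{\ell_t}0^{g_t},
\]
where $t\ge 1$, each $\ell_i\ge 1$, each $g_i\ge 1$ for $i<t$, and $g_t\ge 0$; then $\mathcal L(n)$ is precisely the list $\ell_1,\dots,\ell_t$. Applying the homomorphism turns this into the matrix product $\gamma(1)^{\ell_1}\gamma(0)^{g_1}\cdots\gamma(1)^{\ell_t}\gamma(0)^{g_t}$, which I evaluate on $w$ from the right (so the least significant block acts first). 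By downward induction on $i$ I would show that the suffix $\gamma(1)^{\ell_i}\gamma(0)^{g_i}\cdots\gamma(1)^{\ell_t}\gamma(0)^{g_t}$ sends $w$ to $\bigl(\prod_{j>i}S(\ell_j)\bigr)w_{\ell_i}$: the base case $i=t$ is $\gamma(1)^{\ell_t}\gamma(0)^{g_t}w=\gamma(1)^{\ell_t}w=w_{\ell_t}$, using $\gamma(0)w=w$; the inductive step rewrites $\gamma(0)^{g_i}w_{\ell_{i+1}} = S(\ell_{i+1})w$ (legitimate since $g_i\ge 1$) and then applies $\gamma(1)^{\ell_i}w = w_{\ell_i}$.

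Taking $i=1$ gives $\gamma([n]_2)w = \bigl(\prod_{j=2}^{t}S(\ell_j)\bigr)w_{\ell_1}$, and hence
\[
T(n) = v\gamma([n]_2)w = \Bigl(\prod_{j=2}^{t}S(\ell_j)\Bigr)(w_{\ell_1})_1 = \prod_{j=1}^{t}S(\ell_j) = \prod_{i\in\mathcal L(n)}S(i),
\]
which is exactly the run length transform of $S$ at $n$ (see the example following Definition~\ref{Dfn2}). The case $n=0$ is trivial, since $vw=1$ matches the empty product. Because $(v,\gamma(0),\gamma(1),w)$ is then a linear representation of $T$, the sequence $T(n)$ is $2$-regular.

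I do not anticipate a genuine difficulty; the points requiring care are the order in which $\gamma$ consumes digits (on the column-vector side the least significant digit acts first), the boundary blocks — $g_t$ may be $0$, and the leading run $1^{\ell_1}$ is never ``closed'' by a subsequent $0$, which is precisely why its factor $S(\ell_1)$ is supplied by the final multiplication by $v$ rather than by a copy of $\gamma(0)$ — and the degenerate case $n=0$.
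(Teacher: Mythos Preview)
Your proposal is correct and follows essentially the same route as the paper's proof: decompose $[n]_2$ into alternating blocks of $1$'s and $0$'s, use Lemma~\ref{Lem1} (equivalently $\gamma(1)^kw=w_k$) to handle the $1$-blocks, and use the rank-one structure of $\gamma(0)$ to collapse each $0$-block while emitting the scalar $S(\ell)$. The only cosmetic differences are that the paper separately records the idempotence $\gamma(0)^2=\gamma(0)$ and the identity $\gamma(0)w_n=S(n)w$, whereas you derive both at once from $\gamma(0)x = x_1 w$, and that you phrase the right-to-left evaluation as an explicit induction rather than with ellipses.
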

\begin{proof}
    First, note that:
    \begin{equation} \label{thm1eq1}
        \gamma(0)^2 =
        \begin{bmatrix}
            1 & 0 & \dots & 0 \\
            c_1 & 0 & \dots & 0 \\
            \vdots & \vdots & \dots & \vdots \\
            c_r & 0 & \dots & 0
        \end{bmatrix}^2 = 
        \gamma(0)
    \end{equation}
    It follows that for any integer $k\geq 0$, $\gamma(0)^k = \gamma(0)$. Now let $n\geq0$, then:
    \begin{align*}
        \gamma(0)w_n &= 
         \begin{bmatrix}
            1 & 0 & \dots & 0 \\
            c_1 & 0 & \dots & 0 \\
            \vdots & \vdots & \dots & \vdots \\
            c_r & 0 & \dots & 0
        \end{bmatrix}
        \begin{bmatrix}
            S(n) \\ S(n+1) \\ \vdots \\ S(n+r)
        \end{bmatrix} \\ &= 
        \begin{bmatrix}
            S(n) \\ c_1S(n) \\ \vdots \\ c_rS(n)
        \end{bmatrix} \\ &= S(n)
        \begin{bmatrix}
            1 \\ c_1 \\ \vdots \\ c_r
        \end{bmatrix} \\ &= S(n)w.
    \end{align*}
    So
    \begin{equation} \label{thm1eq2}
        \gamma(0)w_n = S(n)w.
    \end{equation}
    Now let $[n]_2 = 1^{a_1}0^{b_1}\cdots1^{a_k}0^{b_k}$ for some $k\geq1$, and $a_i, b_i \geq 1$ for $i=1,\dots,k$, except possible $b_k = 0$. But if $b_k \neq 0$, then $\gamma(0)^{b_k}w = \gamma(0)w = S(0)w = w$, so without loss of generality, we may assume $b_k = 0$. Then using \eqref{lem1eq1}, \eqref{thm1eq1}, \eqref{thm1eq2}, and Lemma~\ref{Lem1}, we get our result:
    \begin{align*}
        v\gamma([n]_2)w &=v\gamma(1^{a_1}0^{b_1}\dotsm0^{b_{k-1}}1^{a_k})w \\
         &=v\gamma(1)^{a_1}\gamma(0)^{b_1}\dotsm\gamma(0)^{b_{k-1}}\gamma(1)^{a_k}w \\
         &=v\gamma(1)^{a_1}\gamma(0)\dotsm\gamma(0)\gamma(1)^{a_k}w \\
         &=v\gamma(1)^{a_1}\gamma(0)\dotsm\gamma(0)w_{a_k} \\
         &=v\gamma(1)^{a_1}\gamma(0)\dotsm\gamma(1)^{a_{k-1}}S(a_k)w \\
         & \vdots \\
         &=S(a_k)S(a_{k-1})\dotsm S(a_2)v\gamma(1)^{a_1}w \\
         &=S(a_k)S(a_{k-1})\dotsm S(a_2)S(a_1) \\ 
         &= \sum_{i \in \mathcal{L}(n)}S(i) = T(n)
    \end{align*}
\end{proof}
We can therefore generate the $n^{th}$ term of $S(n)$ by evaluating $v\gamma(1)^nw$ and the $n^{th}$ term of $T(n)$ by evaluating $v\gamma([n]_2)w$. So if a sequence $T(n)$ defined by \eqref{T(n)} has an associated quadruple $(v, \gamma(0), \gamma(1), w)$, then $T(n)$ is the run length transform of $S(n)$, where the coefficients of $S(n)$ are given by the bottom row of $\gamma(1)$, and the first $r+1$ terms are given by $w$.

\section{Wu's run length transforms}\label{sec2}
The theorems in this section are due to Wu~\cite{Wu22}, but are proved by obtaining a linear representation
using Walnut, minimizing the linear representation, if necessary, and observing that the resulting
linear representation gives the run length transform of the specified linear recurrence
sequence, as described in Theorem~\ref{Thm1}.  In this way we avoid the technical bitwise
arithmetic of Wu's proofs.

\begin{thm} \label{CWW thm6}
    Let $\displaystyle T(n) = \sum^n_{k=0} \Bigg[\dbinom{n-k}{2k}\dbinom{n}{k}\pmod{2}\Bigg]$. Then $T(n)$ is the
    run length transform of the Fibonacci sequence $1,1,2,3,5,8,\dots$ (OEIS~A000045).
\end{thm}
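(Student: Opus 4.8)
The plan is to follow the recipe laid out immediately after Theorem~\ref{Thm1}: run \texttt{Walnut} on the summand, obtain a linear representation, minimize it, and then read off the linear recurrence sequence from the shape of the minimized quadruple. Concretely, I would first invoke the \texttt{eval} command described in Section~\ref{sec1} with the parameters $a_1 = 1$, $a_2 = -1$, $a_3 = 0$, $a_4 = 2$ corresponding to $\binom{n-k}{2k}$ (after verifying that the nonnegativity conditions $a_1 + a_2 \geq 0$ and $a_3 + a_4 \geq 0$ hold, which they do). This produces a raw linear representation $(v, \gamma(0), \gamma(1), w)$ of some rank $d$ for $T(n)$.

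Next I would apply the Sch\"utzenberger minimization algorithm (as described at the end of Section~\ref{sec1}) to reduce this to a representation of minimal rank. The Fibonacci recurrence $S(n+1) = S(n) + S(n-1)$ has order $r = 1$ in the notation of Definition~\ref{Dfn1}, with $d_0 = d_1 = 1$ and initial data $S(0) = 1$, $S(1) = 1$ (so $c_1 = 1$), and the associated quadruple from the paragraph before Definition~\ref{Dfn1} has rank $r + 1 = 2$. So I expect the minimized representation to have rank $2$, with $v = \begin{bmatrix} 1 & 0 \end{bmatrix}$, $w = \begin{bmatrix} 1 \\ 1 \end{bmatrix}$, $\gamma(0) = \begin{bmatrix} 1 & 0 \\ 1 & 0 \end{bmatrix}$, and $\gamma(1) = \begin{bmatrix} 0 & 1 \\ 1 & 1 \end{bmatrix}$ — exactly the matrices prescribed by Definition~\ref{Dfn1} for the Fibonacci parameters. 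The verification is then immediate: by Theorem~\ref{Thm1}, a quadruple of this form is a linear representation for the run length transform of the linear recurrence sequence whose coefficients are the bottom row of $\gamma(1)$ (here $d_1 = 1$, $d_0 = 1$) and whose first $r + 1 = 2$ values are the entries of $w$ (here $1, 1$); that sequence is precisely the Fibonacci sequence $1, 1, 2, 3, 5, 8, \dots$ (A000045).

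One subtlety is that the minimization algorithm is only guaranteed to return \emph{a} minimal-rank representation, not the specific one in Definition~\ref{Dfn1}; two minimal linear representations of the same sequence need not be equal, only related by an invertible change of basis. So after minimizing I may instead obtain some $(v', \gamma'(0), \gamma'(1), w')$ of rank $2$ that does not literally match the template. To close this gap I would exhibit an explicit invertible $2 \times 2$ integer matrix $P$ with $v' P = v$, $P^{-1} \gamma'(0) P = \gamma(0)$, $P^{-1} \gamma'(1) P = \gamma(1)$, $P^{-1} w' = w$; finding $P$ is a routine linear-algebra computation once the matrices are in hand. Alternatively, and more simply, since both the Walnut-minimized sequence and the claimed run length transform of Fibonacci are $2$-regular of rank $2$, it suffices to check that they agree on enough initial values — concretely, on $n = 0, 1, \dots, N$ for $N$ large enough that the check is conclusive (a bound like $N = 2 \cdot 2 \cdot 2 = 8$ terms, or a few more to be safe, suffices since two rank-$2$ linear representations over a common automaton structure that agree on a spanning set of prefixes agree everywhere).

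The main obstacle I anticipate is purely bookkeeping: making sure the encoding of $\binom{n-k}{2k}$ into the \texttt{BINOM2} automaton arguments is correct — in particular that $2k$ is handled properly by \texttt{Walnut}'s \texttt{?msd\_2} arithmetic and that the padding conventions in Lucas's theorem \eqref{lem2eq1} are respected by the automaton of Figure~\ref{binom2-aut} when the two arguments have different bit-lengths. Once the command runs and returns a representation, the rest is a short deterministic check. There is no deep argument here; the content of the theorem has been entirely offloaded onto Theorem~\ref{Thm1} plus a finite computation, which is exactly the point of the paper.
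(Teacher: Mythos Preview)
Your proposal is correct and follows essentially the same approach as the paper: feed the summand to \texttt{Walnut}, obtain a linear representation, and invoke Theorem~\ref{Thm1} to read off the Fibonacci recurrence from the shape of $(v,\gamma(0),\gamma(1),w)$. The only difference is that for this particular sum \texttt{Walnut} already returns the rank-$2$ representation in exactly the template form of Definition~\ref{Dfn1}, so neither the minimization step nor your change-of-basis contingency is actually needed here (though your caution is well placed, since several of the later theorems do require minimization).
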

\begin{proof}
    Putting the above sequence into Walnut returns the quadruple $(v, \gamma(0), \gamma(1), w)$, with:
    \begin{align*} v =
        \begin{bmatrix}
            1 & 0
        \end{bmatrix}, \;
    w = 
        \begin{bmatrix}
            1 \\ 1
        \end{bmatrix}, \;
    \gamma(0) = 
        \begin{bmatrix}
            1 & 0 \\ 1 & 0
        \end{bmatrix}, \;
    \gamma(1) = 
        \begin{bmatrix}
            0 & 1 \\ 1 & 1
        \end{bmatrix}
    \end{align*}
    Thus by Lemma~\ref{Lem1} and Theorem~\ref{Thm1}, $T(n)$ is the run length transform of the sequence defined by $S(n) = S(n-1) + S(n-2)$ for $n\geq2$, with $S(0) = S(1) = 1$. This is precisely the Fibonacci sequence, which confirms the result.
\end{proof}
\begin{thm} \label{CWW thm7}
    Let $\displaystyle T(n) = \sum^n_{k=0} \Bigg[\dbinom{3k}{k}\dbinom{n}{k}\pmod{2}\Bigg]$. Then $T(n)$ is the run length transform of the truncated Fibonacci sequence $1,2,3,5,8,13,\dots$
\end{thm}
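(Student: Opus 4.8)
The plan is to imitate the proof of Theorem~\ref{CWW thm6}. First I would invoke the Walnut command of Section~\ref{sec1} with $(a_1,a_2,a_3,a_4) = (0,3,0,1)$, so that the predicate encodes $\binom{3k}{k}\binom{n}{k} \equiv 1 \pmod{2}$; Walnut handles the multiplication $3k$ and the resulting base-$2$ carries internally, so by the discussion following \eqref{lem2eq2} its output is a valid linear representation $(v,\gamma(0),\gamma(1),w)$ for $T(n)$ with $T(n) = v\gamma([n]_2)w$. Since this representation need not have minimal rank, I would then apply the Schützenberger minimization algorithm to it.

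I expect the minimized quadruple to be, up to a change of basis, $v = \begin{bmatrix} 1 & 0 \end{bmatrix}$, $w = \begin{bmatrix} 1 \\ 2 \end{bmatrix}$, $\gamma(0) = \begin{bmatrix} 1 & 0 \\ 2 & 0 \end{bmatrix}$, $\gamma(1) = \begin{bmatrix} 0 & 1 \\ 1 & 1 \end{bmatrix}$, which is exactly the canonical shape of Definition~\ref{Dfn1} and the paragraph following it, with $r = 1$, $d_0 = d_1 = 1$, and $c_1 = 2$. By Lemma~\ref{Lem1} and Theorem~\ref{Thm1}, such a quadruple is the linear representation of the run length transform of the sequence defined by $S(n+1) = S(n) + S(n-1)$ for $n \geq 1$ with $S(0) = 1$ and $S(1) = 2$, that is, the truncated Fibonacci sequence $1,2,3,5,8,13,\dots$; comparing this with the $T(n)$ computed by Walnut then completes the proof.

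The step I expect to be the main obstacle is confirming that the minimized representation really is the canonical one, rather than merely some rank-$2$ representation of the same sequence, since the minimization procedure does not fix a basis. Two routes handle this: either observe that the output is already in the companion form of Theorem~\ref{Thm1} (as happened in Theorem~\ref{CWW thm6}) and read off the data directly, or exhibit the explicit change of basis conjugating Walnut's minimized matrices into the matrices displayed above and transforming $v$ and $w$ accordingly. In either case, once the matrices are in the form of Theorem~\ref{Thm1}, identifying the underlying linear recurrence -- its coefficients from the bottom row of $\gamma(1)$ and its initial values from $w$ -- is immediate, and no bitwise manipulation of binary expansions is required.
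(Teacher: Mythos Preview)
Your proposal is correct and follows exactly the paper's approach: feed the sum into \texttt{Walnut} with $(a_1,a_2,a_3,a_4)=(0,3,0,1)$, minimize, observe that the resulting quadruple is precisely the one you wrote down, and then invoke Lemma~\ref{Lem1} and Theorem~\ref{Thm1}. Your extra caution about whether the minimized output is literally in the companion form of Theorem~\ref{Thm1} is reasonable but turns out to be unnecessary here, since the minimized matrices already come out in that form without any change of basis.
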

\begin{proof}
     Putting the above sequence into Walnut, and then minimizing it, we get the quadruple $(v, \gamma(0), \gamma(1), w)$, with:
    \begin{align*} v =
        \begin{bmatrix}
            1 & 0
        \end{bmatrix}, \;
    w = 
        \begin{bmatrix}
            1 \\ 2
        \end{bmatrix}, \;
    \gamma(0) = 
        \begin{bmatrix}
            1 & 0 \\ 2 & 0
        \end{bmatrix}, \;
    \gamma(1) = 
        \begin{bmatrix}
            0 & 1 \\ 1 & 1
        \end{bmatrix}
    \end{align*}
    Thus by Lemma~\ref{Lem1} and Theorem~\ref{Thm1}, $T(n)$ is the run length transform of the sequence defined by $S(n) = S(n-1) + S(n-2)$ for $n\geq2$, with $S(0) = 1$, $S(1) = 2$. This is precisely the truncated Fibonacci sequence, which confirms the result.
\end{proof}
\begin{thm} \label{CWW thm8}
    Let $\displaystyle T(n)$ = $\sum^n_{k=0}\Bigg[\dbinom{n}{2k}\dbinom{n}{k}\pmod{2}\Bigg]$. Then $T(n)$ is the run length transform of the sequence $1,1,2,4,8,16,\dots,$ i.e., $1$ followed by the positive powers of $2$ (OEIS~A000079).
\end{thm}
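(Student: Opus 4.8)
The plan is to proceed exactly as in the proofs of Theorems~\ref{CWW thm6} and~\ref{CWW thm7}. First I would run the \texttt{eval} command of Section~\ref{sec1} on the sum $T(n)$ defining this theorem, i.e., with parameters $(a_1,a_2,a_3,a_4)=(1,0,0,2)$, so that the two binomial factors handled by the automaton \texttt{BINOM2} are $\binom{n}{2k}$ and $\binom{n}{k}$. \texttt{Walnut} returns a linear representation $(v,\gamma(0),\gamma(1),w)$ for $T(n)$, and I would then apply the minimization procedure recalled in Section~\ref{sec1} if its rank exceeds $2$.

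I expect the minimized quadruple to be
\[
    v = \begin{bmatrix} 1 & 0 \end{bmatrix}, \quad
    w = \begin{bmatrix} 1 \\ 1 \end{bmatrix}, \quad
    \gamma(0) = \begin{bmatrix} 1 & 0 \\ 1 & 0 \end{bmatrix}, \quad
    \gamma(1) = \begin{bmatrix} 0 & 1 \\ 0 & 2 \end{bmatrix},
\]
which is exactly the normal form of Theorem~\ref{Thm1} for $r=1$, $c_1=1$, $d_0=2$, $d_1=0$. By Lemma~\ref{Lem1} and Theorem~\ref{Thm1}, it then follows that $T(n)$ is the run length transform of the sequence $S$ determined by $S(0)=1$, $S(1)=c_1=1$, and $S(n)=d_0S(n-1)+d_1S(n-2)=2S(n-1)$ for $n\geq2$. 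That sequence is $1,1,2,4,8,16,\dots$, i.e., $1$ followed by the positive powers of $2$ (OEIS~A000079), which is the claim.

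The one real subtlety is the caveat about minimization noted in Section~\ref{sec1}: Schutzenberger's algorithm is guaranteed only to return a representation of \emph{minimal rank}, not necessarily one already written in the normal form of Theorem~\ref{Thm1}. If \texttt{Walnut}'s minimized output differs from the matrices above, I would exhibit an explicit invertible matrix $P$ and replace $(v,\gamma(0),\gamma(1),w)$ by $(vP^{-1},P\gamma(0)P^{-1},P\gamma(1)P^{-1},Pw)$ to bring it into that form; since a minimal linear representation is unique up to such a change of basis, this is a routine finite check. Alternatively, one can bypass the normal form entirely, verifying directly from \texttt{Walnut}'s output that $\gamma(0)^2=\gamma(0)$, that $v\gamma(1)^nw$ equals $1$ at $n=0$ and $2^{n-1}$ for $n\geq1$, and that $\gamma(0)w_n=S(n)w$ for the relevant vectors $w_n$, and then reusing the telescoping computation in the proof of Theorem~\ref{Thm1} verbatim. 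I expect the bookkeeping to match the two previous theorems, so no genuine difficulty should arise.
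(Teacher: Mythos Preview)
Your proposal is correct and follows essentially the same approach as the paper's proof: run \texttt{Walnut} on $T(n)$ with $(a_1,a_2,a_3,a_4)=(1,0,0,2)$, obtain the rank-$2$ linear representation you display (in the paper's case no minimization is needed---\texttt{Walnut} already outputs this quadruple), and then invoke Lemma~\ref{Lem1} and Theorem~\ref{Thm1} to identify $S(n)$ as $1,1,2,4,8,\dots$. Your additional remarks about possibly needing a change of basis are a sensible precaution but turn out to be unnecessary here.
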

\begin{proof}
     Putting the above sequence into Walnut returns the quadruple $(v, \gamma(0), \gamma(1), w)$, with:
    \begin{align*} v =
        \begin{bmatrix}
            1 & 0
        \end{bmatrix}, \;
    w = 
        \begin{bmatrix}
            1 \\ 1
        \end{bmatrix}, \;
    \gamma(0) = 
        \begin{bmatrix}
            1 & 0 \\ 1 & 0
        \end{bmatrix}, \;
    \gamma(1) = 
        \begin{bmatrix}
            0 & 1 \\ 0 & 2
        \end{bmatrix}
    \end{align*}
    Thus by Lemma~\ref{Lem1} and Theorem~\ref{Thm1}, $T(n)$ is the run length transform of the sequence defined by $S(n) = 2S(n-1)$ for $n\geq2$, with $S(0) = S(1) = 1$. This is precisely the sequence $1$ followed by the positive powers of $2$, which confirms the result.
\end{proof}
\begin{thm} \label{CWW thm9}
    Let $\displaystyle T(n) = \sum^n_{k=0} \Bigg[\dbinom{n+2k}{2k}\dbinom{n}{k}\pmod{2}\Bigg]$. Then $T(n)$ is the run length transform of the sequence $1,2,2,2,2,2,\dots,$ i.e., the sequence of $2$'s prepended with a $1$ (OEIS~A040000).
\end{thm}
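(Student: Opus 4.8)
The plan is to follow the same strategy as in Theorems~\ref{CWW thm6}--\ref{CWW thm8}. Here the parameters are $a_1=1$, $a_2=2$, $a_3=0$, $a_4=2$, so $a_1n+a_2k=n+2k$ and $a_3n+a_4k=2k$, and the \texttt{Walnut} command of Section~\ref{sec1} specializes to
\begin{verbatim}
eval T n "?msd_2 (k <= n) & BINOM2[n+2*k][2*k]=@1 & BINOM2[n][k]=@1":
\end{verbatim}
This outputs a linear representation $(v,\gamma(0),\gamma(1),w)$ for $T(n)$; if its rank exceeds $2$ I would apply the Schutzenberger minimization to reduce it to minimal rank, exactly as was done in Theorem~\ref{CWW thm7}.

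Next I would compare the (minimized) quadruple with the canonical shape of Definition~\ref{Dfn1} and Theorem~\ref{Thm1}. The target sequence $1,2,2,2,\dots$ satisfies $S(n)=S(n-1)$ for $n\geq 2$ with $S(0)=1$ and $S(1)=2$, i.e.\ $r=1$, $c_1=2$, $d_0=1$, $d_1=0$, and the formulas preceding Lemma~\ref{Lem1} then predict
\[
v=\begin{bmatrix}1 & 0\end{bmatrix},\qquad
w=\begin{bmatrix}1\\ 2\end{bmatrix},\qquad
\gamma(0)=\begin{bmatrix}1 & 0\\ 2 & 0\end{bmatrix},\qquad
\gamma(1)=\begin{bmatrix}0 & 1\\ 0 & 1\end{bmatrix}.
\]
Once \texttt{Walnut} (together with minimization, if needed) reproduces this quadruple, Lemma~\ref{Lem1} and Theorem~\ref{Thm1} identify $T(n)$ as the run length transform of the recurrence sequence whose coefficients are the bottom row of $\gamma(1)$ and whose first $r+1$ terms are the entries of $w$, namely $S(n)=S(n-1)$ for $n\geq2$, $S(0)=1$, $S(1)=2$. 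That is precisely the sequence of $2$'s prepended with a $1$ (OEIS~A040000), which is the claim.

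The step to watch is the last one. A minimal linear representation is unique only up to the conjugation $(v,\mu(0),\mu(1),w)\mapsto(vP^{-1},P\mu(0)P^{-1},P\mu(1)P^{-1},Pw)$, so \texttt{Walnut}'s output need not be \emph{literally} the predicted quadruple but may differ from it by such a change of basis $P$; in that case I would either exhibit $P$ explicitly, or simply note that the two rank-$2$ representations agree on enough initial values of the sequence to force equality, so both compute $T(n)$. All of the bitwise content concerning $\binom{n+2k}{2k}\binom{n}{k}$ modulo $2$ is absorbed into the automaton \texttt{BINOM2} and discharged by \texttt{Walnut}, so no hand manipulation of the Lucas congruences \eqref{lem2eq1}--\eqref{lem2eq2} is required, which is the whole point of the approach.
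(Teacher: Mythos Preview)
Your proposal is correct and follows essentially the same approach as the paper: run the \texttt{Walnut} command, minimize the resulting linear representation, observe that it matches the canonical quadruple of Definition~\ref{Dfn1} for $S(n)=S(n-1)$ with $S(0)=1$, $S(1)=2$, and invoke Lemma~\ref{Lem1} and Theorem~\ref{Thm1}. The paper simply reports that the minimized output is literally the predicted quadruple and does not discuss the conjugation caveat you raise, but your extra care there is mathematically sound and does no harm.
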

\begin{proof}
     Putting the above sequence into Walnut, and then minimizing it, we get the quadruple $(v, \gamma(0), \gamma(1), w)$, with:
    \begin{align*} v =
        \begin{bmatrix}
            1 & 0
        \end{bmatrix}, \;
    w = 
        \begin{bmatrix}
            1 \\ 2
        \end{bmatrix}, \;
    \gamma(0) = 
        \begin{bmatrix}
            1 & 0 \\ 2 & 0
        \end{bmatrix}, \;
    \gamma(1) = 
        \begin{bmatrix}
            0 & 1 \\ 0 & 1
        \end{bmatrix}
    \end{align*}
    Thus by Lemma~\ref{Lem1} and Theorem~\ref{Thm1}, $T(n)$ is the run length transform of the sequence defined by $S(n) = S(n-1)$ for $n\geq2$, with $S(0) = 1$, $S(1) = 2$. This is precisely the sequence of $2$'s, prepended with a $1$, which confirms the result.
\end{proof}
\begin{thm} \label{CWW thm10}
    Let $\displaystyle T(n) = \sum^n_{k=0} \Bigg[\dbinom{n+k}{n-k}\dbinom{n}{k}\pmod{2}\Bigg]$. Then $T(n)$ is the run length transform of the sequence of positive integers $1,2,3,4,5,6,\dots$ (OEIS~A000027).
\end{thm}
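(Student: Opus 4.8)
The plan is to follow the template of Theorems~\ref{CWW thm6}--\ref{CWW thm9}: obtain a linear representation for $T(n)$ with \texttt{Walnut}, minimize it, and recognize the result as a quadruple of the shape prescribed by Definition~\ref{Dfn1}, so that Lemma~\ref{Lem1} and Theorem~\ref{Thm1} identify $T(n)$ as the run length transform of an explicit linear recurrence sequence.

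First I would run the command from Section~\ref{sec1} with $a_1 = a_2 = a_3 = 1$ and $a_4 = -1$, namely
\begin{verbatim}
eval T n "?msd_2 (k <= n) & BINOM2[n+k][n-k]=@1 & BINOM2[n][k]=@1":
\end{verbatim}
where \texttt{BINOM2} is the automaton of Figure~\ref{binom2-aut}. This outputs a linear representation for $T(n)$ which, as in the cases of Theorems~\ref{CWW thm7} and~\ref{CWW thm9}, is expected to be of non-minimal rank; applying the Sch\"utzenberger minimization algorithm should bring it down to rank $2$. I expect the minimized quadruple $(v, \gamma(0), \gamma(1), w)$ to be
\begin{align*}
v = \begin{bmatrix} 1 & 0 \end{bmatrix}, \;
w = \begin{bmatrix} 1 \\ 2 \end{bmatrix}, \;
\gamma(0) = \begin{bmatrix} 1 & 0 \\ 2 & 0 \end{bmatrix}, \;
\gamma(1) = \begin{bmatrix} 0 & 1 \\ -1 & 2 \end{bmatrix}
\end{align*}

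This is precisely the quadruple of Definition~\ref{Dfn1} with $r = 1$, $d_0 = 2$, $d_1 = -1$, and $c_1 = 2$: the bottom row of $\gamma(1)$ is $\begin{bmatrix} -1 & 2 \end{bmatrix}$ and $w$ records the initial values $S(0) = 1$, $S(1) = 2$. Hence, by Lemma~\ref{Lem1} and Theorem~\ref{Thm1}, $T(n)$ is the run length transform of the sequence $S$ defined by $S(n) = 2S(n-1) - S(n-2)$ for $n \geq 2$ with $S(0) = 1$ and $S(1) = 2$. Checking the first few terms ($S(2) = 3$, $S(3) = 4$, $S(4) = 5$, \dots) or solving the recurrence shows $S(n) = n + 1$, so $S$ is the sequence of positive integers $1, 2, 3, 4, 5, 6, \dots$ (OEIS~A000027), confirming the claim.

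The step I expect to be the main obstacle is that Sch\"utzenberger's algorithm only determines a minimal linear representation up to conjugation, so the matrices it returns need not literally be in the companion-like normal form of Definition~\ref{Dfn1}. If the minimized output differs from the displayed quadruple, I would either exhibit an explicit invertible matrix conjugating one representation to the other (a small linear-algebra computation in rank $2$), or argue directly that $v \gamma([n]_2) w$ equals the run length transform of $n \mapsto n + 1$ by rerunning the telescoping computation from the proof of Theorem~\ref{Thm1} with the minimized matrices.
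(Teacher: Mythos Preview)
Your proposal is correct and matches the paper's proof essentially verbatim: the paper runs \texttt{Walnut}, minimizes, displays exactly the rank-$2$ quadruple you predicted, and invokes Lemma~\ref{Lem1} and Theorem~\ref{Thm1} to identify $S(n)=2S(n-1)-S(n-2)$ with $S(0)=1$, $S(1)=2$ as the positive integers. Your additional paragraph on handling a possible conjugate of the normal form is a sensible precaution that the paper does not address, but it turns out to be unnecessary here since the minimized output is already in the companion shape.
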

\begin{proof}
     Putting the above sequence into Walnut, and then minimizing it, we get the quadruple $(v, \gamma(0), \gamma(1), w)$, with:
    \begin{align*} v =
        \begin{bmatrix}
            1 & 0
        \end{bmatrix}, \;
    w = 
        \begin{bmatrix}
            1 \\ 2
        \end{bmatrix}, \;
    \gamma(0) = 
        \begin{bmatrix}
            1 & 0 \\ 2 & 0
        \end{bmatrix}, \;
    \gamma(1) = 
        \begin{bmatrix}
            0 & 1 \\ -1 & 2
        \end{bmatrix}
    \end{align*}
    Thus by Lemma~\ref{Lem1} and Theorem~\ref{Thm1}, $T(n)$ is the run length transform of the sequence defined by $S(n) = 2S(n-1) - S(n-2)$ for $n\geq2$, with $S(0) = 1$, $S(1) = 2$. This is precisely the sequence of positive integers, which confirms the result.
\end{proof}
\begin{thm} \label{CWW thm14}
    Let $\displaystyle T(n) = \sum^n_{k=0}\Bigg[\dbinom{n-k}{6k}\dbinom{n}{k}\pmod{2}\Bigg]$. Then $T(n)$ is the run length transform of Narayana's cows sequence $1,1,1,2,3,4,6,9,\dots$ (OEIS~A000930).
\end{thm}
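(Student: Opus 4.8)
The plan is to proceed exactly as in the proofs of Theorems~\ref{CWW thm6}--\ref{CWW thm10}. First I would feed the sum defining $T(n)$ into \texttt{Walnut} using the \texttt{eval} command from Section~\ref{sec1}, here with $a_1=1$, $a_2=-1$, $a_3=0$, $a_4=6$, so that the first binomial coefficient read by the automaton is $\binom{n-k}{6k}$ (note $a_1+a_2=0\ge 0$ and $a_3+a_4=6\ge 0$, so the standing hypotheses on the exponents hold). This produces a linear representation for $T(n)$, which I would then minimize via the Sch\"utzenberger algorithm described at the end of Section~\ref{sec1}.

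Narayana's cows sequence A000930 satisfies the order-$3$ recurrence $S(n)=S(n-1)+S(n-3)$ with $S(0)=S(1)=S(2)=1$; in the notation of Definition~\ref{Dfn1} this means $r=2$, $(d_0,d_1,d_2)=(1,0,1)$, and $c_1=c_2=1$. Accordingly I expect the minimized representation to have rank $r+1=3$ and to be
\begin{align*}
    v &= \begin{bmatrix} 1 & 0 & 0 \end{bmatrix}, \qquad
    w = \begin{bmatrix} 1 \\ 1 \\ 1 \end{bmatrix}, \\
    \gamma(0) &= \begin{bmatrix} 1 & 0 & 0 \\ 1 & 0 & 0 \\ 1 & 0 & 0 \end{bmatrix}, \qquad
    \gamma(1) = \begin{bmatrix} 0 & 1 & 0 \\ 0 & 0 & 1 \\ 1 & 0 & 1 \end{bmatrix},
\end{align*}
which is precisely the canonical quadruple of Theorem~\ref{Thm1} for this $S$. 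Reading the coefficients $(d_2,d_1,d_0)=(1,0,1)$ off the bottom row of $\gamma(1)$ and the initial values $S(0)=S(1)=S(2)=1$ off $w$, Lemma~\ref{Lem1} and Theorem~\ref{Thm1} then identify $T(n)$ as the run length transform of Narayana's cows sequence.

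The only genuine obstacle is that the representation \texttt{Walnut} returns before minimization will have larger rank and need not be in the block form of Theorem~\ref{Thm1}, and even a minimal representation is unique only up to conjugation by an invertible rational matrix, so the output may appear in a different basis than the one above. If that occurs, I would either exhibit the explicit change of basis bringing it to canonical form, or argue directly: verify $\gamma(0)^2=\gamma(0)$ (so that the computation in the proof of Theorem~\ref{Thm1} applies verbatim with $S(n):=v\gamma(1)^n w$), compute the characteristic polynomial of $\gamma(1)$ and check that it is $x^3-x^2-1$, and confirm $v\gamma(1)^n w = 1$ for $n=0,1,2$. These facts determine $S$ uniquely as Narayana's cows sequence, hence $T$ as its run length transform. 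Since the underlying recurrence now has order $3$ rather than $2$, the one extra point worth checking is simply that the minimization does return rank $3$ and nothing smaller.
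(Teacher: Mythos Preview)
Your proposal is correct and follows essentially the same approach as the paper: compute a linear representation with \texttt{Walnut}, compare it to the canonical quadruple from Theorem~\ref{Thm1} for the recurrence $S(n)=S(n-1)+S(n-3)$ with $S(0)=S(1)=S(2)=1$, and invoke Lemma~\ref{Lem1} and Theorem~\ref{Thm1}. The only minor discrepancy is that in this particular case \texttt{Walnut} already outputs the rank-$3$ representation in canonical form, so the minimization step and your contingency about change of basis turn out to be unnecessary here.
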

\begin{proof}
     Putting the above sequence into Walnut returns the quadruple $(v, \gamma(0), \gamma(1), w)$, with:
    \begin{align*} v =
        \begin{bmatrix}
            1 & 0 & 0
        \end{bmatrix}, \;
    w = 
        \begin{bmatrix}
            1 \\ 1 \\ 1
        \end{bmatrix}, \;
    \gamma(0) = 
        \begin{bmatrix}
            1 & 0 & 0\\ 1 & 0 & 0 \\ 1 & 0 & 0
        \end{bmatrix}, \;
    \gamma(1) = 
        \begin{bmatrix}
            0 & 1 & 0 \\ 0 & 0 & 1 \\ 1 & 0 & 1
        \end{bmatrix}
    \end{align*}
    Thus by Lemma~\ref{Lem1} and Theorem~\ref{Thm1}, $T(n)$ is the run length transform of the sequence defined by $S(n) = S(n-1) + S(n-3)$ for $n\geq3$, with $S(0) = S(1) = S(2) = 1$. This is precisely Narayana's cows sequence, which confirms the result.
\end{proof}
\begin{thm} \label{CWW thm15}
    Let $\displaystyle T(n) = \sum^n_{k=0}\Bigg[\dbinom{n+3k}{6k}\dbinom{n}{k}\pmod{2}\Bigg]$. Then $T(n)$ is the run length transform of the sequence of doubled positive integers, i.e., $1,1,2,2,3,3,4,4,\dots$ (OEIS~A008619).
\end{thm}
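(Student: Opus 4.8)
The plan is to follow exactly the template used for Theorems~\ref{CWW thm6}--\ref{CWW thm14}: feed the defining sum to \texttt{Walnut}, minimize the resulting linear representation, and check that the minimal quadruple $(v,\gamma(0),\gamma(1),w)$ has the shape prescribed in Definition~\ref{Dfn1}, so that Lemma~\ref{Lem1} and Theorem~\ref{Thm1} identify $T(n)$ as the run length transform of the linear recurrence sequence read off from the bottom row of $\gamma(1)$ and from $w$. Here the binomial $\binom{n+3k}{6k}$ corresponds to the parameters $a_1=1$, $a_2=3$, $a_3=0$, $a_4=6$ in \eqref{T(n)}, and these satisfy $a_1+a_2\geq 0$ and $a_3+a_4\geq 0$, so the set-up of Section~\ref{sec1} applies. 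Concretely, I would run
\begin{verbatim}
eval T n "?msd_2 (k <= n) & BINOM2[n+3*k][6*k]=@1 & BINOM2[n][k]=@1":
\end{verbatim}
and then minimize.

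I expect the minimization to produce a rank-$3$ representation equivalent to
\begin{align*} v =
    \begin{bmatrix} 1 & 0 & 0 \end{bmatrix}, \;
w = \begin{bmatrix} 1 \\ 1 \\ 2 \end{bmatrix}, \;
\gamma(0) = \begin{bmatrix} 1 & 0 & 0 \\ 1 & 0 & 0 \\ 2 & 0 & 0 \end{bmatrix}, \;
\gamma(1) = \begin{bmatrix} 0 & 1 & 0 \\ 0 & 0 & 1 \\ -1 & 1 & 1 \end{bmatrix}.
\end{align*}
This is precisely the form of Definition~\ref{Dfn1} with $r=2$, initial terms $S(0)=S(1)=1$, $S(2)=2$, and recurrence coefficients $d_0=d_1=1$, $d_2=-1$; that is, $S(n)=S(n-1)+S(n-2)-S(n-3)$ for $n\geq 3$. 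Rewriting this as $S(n)-S(n-2)=S(n-1)-S(n-3)$ and iterating shows that $S(n)-S(n-2)$ is constant, equal to $S(2)-S(0)=1$, so $S(n)=S(n-2)+1$ for $n\geq 2$; together with $S(0)=S(1)=1$ this gives $S(n)=\lfloor n/2\rfloor+1$, i.e.\ the doubled positive integers $1,1,2,2,3,3,4,4,\dots$ of OEIS~A008619. By Lemma~\ref{Lem1} and Theorem~\ref{Thm1}, $T(n)$ is the run length transform of this sequence, which is the claim.

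The only step that is not purely mechanical is confirming that the minimized representation really is in the canonical form of Definition~\ref{Dfn1}: \texttt{Walnut}'s raw output will in general have larger rank and an essentially arbitrary basis, and even the minimal representation is unique only up to a change of basis. If the minimized quadruple comes out in a different basis, one conjugates $\gamma(0)$ and $\gamma(1)$ (adjusting $v$ and $w$ accordingly) so as to put $\gamma(0)$ into the rank-one block form and $\gamma(1)$ into companion form; since the rank is $3$, this is a routine finite computation. As an independent sanity check one can verify the first few values of $T(n)$ directly from \eqref{T(n)} against the run length transform of A008619 — for instance $T(3)=S(2)=2$ and $T(7)=S(3)=2$, both of which are easy to confirm by hand.
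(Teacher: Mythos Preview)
Your proposal is correct and follows exactly the same approach as the paper: run \texttt{Walnut}, minimize, obtain the rank-$3$ quadruple displayed (identical to the paper's), and invoke Lemma~\ref{Lem1} and Theorem~\ref{Thm1}. In fact your reading of the bottom row of $\gamma(1)$ as giving $S(n)=S(n-1)+S(n-2)-S(n-3)$ is the correct interpretation of the companion form in Definition~\ref{Dfn1}; the paper states the recurrence as $S(n)=-S(n-1)+S(n-2)+S(n-3)$, which appears to be a transcription slip (it does not generate $1,1,2,2,3,3,\dots$), though both proofs identify the same target sequence.
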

\begin{proof}
     Putting the above sequence into Walnut  and then minimizing it, we get the quadruple $(v, \gamma(0), \gamma(1), w)$, with:
    \begin{align*} v =
        \begin{bmatrix}
            1 & 0 & 0
        \end{bmatrix}, \;
    w = 
        \begin{bmatrix}
            1 \\ 1 \\ 2
        \end{bmatrix}, \;
    \gamma(0) = 
        \begin{bmatrix}
            1 & 0 & 0\\ 1 & 0 & 0 \\ 2 & 0 & 0
        \end{bmatrix}, \;
    \gamma(1) = 
        \begin{bmatrix}
            0 & 1 & 0 \\ 0 & 0 & 1 \\ -1 & 1 & 1
        \end{bmatrix}
    \end{align*}
    Thus by Lemma~\ref{Lem1} and Theorem~\ref{Thm1}, $T(n)$ is the run length transform of the sequence defined by $S(n) = -S(n-1) +S(n-2) + S(n-3)$ for $n\geq3$, with $S(0) = S(1) = 1$, $S(2) = 2$. This is precisely the sequence of doubled positive integers, which confirms the result.
\end{proof}
\begin{thm} \label{CWW thm17}
    Let $\displaystyle T(n) = \sum^n_{k=0}\Bigg[\dbinom{n+2k}{2n-k}\dbinom{n}{k}\pmod{2}\Bigg]$. Then $T(n)$ is the run length transform of the Lucas numbers, prepended with the terms $1,1$, i.e., the sequence $1,1,2,1,3, 4,7,11,\dots$ (OEIS~A329723).
\end{thm}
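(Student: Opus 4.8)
The plan is to apply the same recipe used in the proofs of Theorems~\ref{CWW thm6}--\ref{CWW thm15}. First I would feed the summand into \texttt{Walnut} using the \texttt{eval} template of Section~\ref{sec1} with $a_1=1$, $a_2=2$, $a_3=2$, $a_4=-1$, so that $a_1n+a_2k=n+2k$ and $a_3n+a_4k=2n-k$; note that the hypotheses $a_1+a_2=3\ge 0$ and $a_3+a_4=1\ge 0$ of \eqref{T(n)} hold, so the summand is of the admissible form. \texttt{Walnut} returns a linear representation for $T(n)$; I would then run the Schützenberger minimization algorithm on it and read off the resulting quadruple $(v,\gamma(0),\gamma(1),w)$.

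I expect the minimized representation to have rank $4$ and (up to a change of basis) to be
\[
v=\begin{bmatrix}1&0&0&0\end{bmatrix},\qquad
w=\begin{bmatrix}1\\1\\2\\1\end{bmatrix},\qquad
\gamma(0)=\begin{bmatrix}1&0&0&0\\1&0&0&0\\2&0&0&0\\1&0&0&0\end{bmatrix},\qquad
\gamma(1)=\begin{bmatrix}0&1&0&0\\0&0&1&0\\0&0&0&1\\0&0&1&1\end{bmatrix}.
\]
This matches the shape of Definition~\ref{Dfn1} with $r=3$: the bottom row of $\gamma(1)$ is $\begin{bmatrix}0&0&1&1\end{bmatrix}$ and the entries of $w$ are $1,1,2,1$. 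Hence by Lemma~\ref{Lem1} and Theorem~\ref{Thm1}, $T(n)$ is the run length transform of the sequence $S$ defined by $S(n)=S(n-1)+S(n-2)$ for $n\ge 4$ with $S(0)=S(1)=1$, $S(2)=2$, $S(3)=1$. Computing the first few terms gives $1,1,2,1,3,4,7,11,18,\dots$, which is the Lucas sequence $2,1,3,4,7,11,18,\dots$ with $1,1$ prepended, i.e.\ OEIS~A329723, which is the claim.

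The step I expect to be the main obstacle — and the feature that distinguishes this theorem from the purely Fibonacci case of Theorem~\ref{CWW thm6} — is that the Lucas recurrence does \emph{not} govern $S$ from the start: $S(3)=1\ne S(2)+S(1)$. Consequently $T$ cannot have a representation of the form in Definition~\ref{Dfn1} of rank $\le 3$; the two extra coordinates in the rank-$4$ representation exist solely to carry the ``irregular'' initial values $S(2)=2$ and $S(3)=1$ forward until the order-$2$ recurrence takes over at index $4$. To be safe I would confirm this explicitly: a short linear-algebra computation shows that no quadruple as in Definition~\ref{Dfn1} of rank $1$, $2$, or $3$ can reproduce the initial segment $1,1,2,1,3,4,7$ (the corresponding linear systems are inconsistent), and the companion quadruple above is both reachable and observable — indeed $v\gamma(1)^i$ for $i=0,1,2,3$ are the standard basis row vectors and $\gamma(1)^i w$ for $i=0,1,2,3$ are linearly independent — hence it is minimal. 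Once the shape of the minimized representation is pinned down, extracting $S$ via the description following Theorem~\ref{Thm1} and identifying it with A329723 is routine.
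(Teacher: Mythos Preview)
Your proposal is correct and follows essentially the same route as the paper: feed the sum into \texttt{Walnut}, minimize, read off the rank-$4$ quadruple (identical to the one in the paper), and invoke Lemma~\ref{Lem1} and Theorem~\ref{Thm1} to identify the underlying recurrence as $S(n)=S(n-1)+S(n-2)$ for $n\ge 4$ with initial values $1,1,2,1$. Your additional discussion of why rank $4$ is necessary is extra detail the paper does not include, but it does not alter the method.
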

\begin{proof}
     Putting the above sequence into Walnut  and then minimizing it, we get the quadruple $(v, \gamma(0), \gamma(1), w)$, with:
    \begin{align*} v =
        \begin{bmatrix}
            1 & 0 & 0 & 0
        \end{bmatrix}, \;
    w = 
        \begin{bmatrix}
            1 \\ 1 \\ 2 \\ 1
        \end{bmatrix}, \;
    \gamma(0) = 
        \begin{bmatrix}
            1 & 0 & 0 & 0 \\ 1 & 0 & 0 & 0 \\ 2 & 0 & 0 & 0 \\ 1 & 0 & 0 & 0
        \end{bmatrix}, \;
    \gamma(1) = 
        \begin{bmatrix}
            0 & 1 & 0 & 0 \\ 0 & 0 & 1 & 0 \\ 0 & 0 & 0 & 1 \\ 0 & 0 & 1 & 1
        \end{bmatrix}
    \end{align*}
    Thus by Lemma~\ref{Lem1} and Theorem~\ref{Thm1}, $T(n)$ is the run length transform of the sequence defined by $S(n) = S(n-1) + S(n-2)$ for $n\geq4$, with $S(0) = S(1) = 1$, $S(2) = 2$, $S(3) = 1$. This is precisely the Lucas numbers prepended by the terms $1,1$.
\end{proof}

\section{New run length transforms} \label{sec3}
In this section we give some new run length transforms. 
\begin{thm} \label{new RLT1}
    Let $\displaystyle T(n) = \sum^n_{k=0}\left[\dbinom{n+5k}{2n+2k}\dbinom{n}{k}\pmod{2}\right]$. Then $T(n)$ is the run length transform of the sequence generated by $S(n) = S(n-1) + S(n-2) - S(n-3) + S(n-4)$ for $n\geq4$, with $S(0) = S(1) = S(2) = S(3) = 1$; i.e., the sequence $1,1,1,1,2,3,5,7,11,16,25,\dots$.
\end{thm}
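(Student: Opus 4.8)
The plan is to proceed exactly as in the proofs of the theorems of Section~\ref{sec2}, using the construction that precedes Theorem~\ref{Thm1}. Here the sum has $(a_1,a_2,a_3,a_4)=(1,5,2,2)$, so the first step is to run the \texttt{Walnut} \texttt{eval} command of Section~\ref{sec1} with \texttt{BINOM2[n+5*k][2*n+2*k]} and \texttt{BINOM2[n][k]}; this returns a linear representation $(v,\gamma(0),\gamma(1),w)$ with $T(n)=v\gamma([n]_2)w$. Since \texttt{Walnut} need not return a representation of minimal rank, the second step is to apply the Schutzenberger minimization algorithm recalled in Section~\ref{sec1}.

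The expected outcome of the minimization is a rank-$4$ quadruple of exactly the canonical form given by Definition~\ref{Dfn1} and the matrices displayed immediately after it, namely
\[
v = \begin{bmatrix} 1 & 0 & 0 & 0 \end{bmatrix}, \quad
w = \begin{bmatrix} 1 \\ 1 \\ 1 \\ 1 \end{bmatrix}, \quad
\gamma(0) = \begin{bmatrix} 1 & 0 & 0 & 0 \\ 1 & 0 & 0 & 0 \\ 1 & 0 & 0 & 0 \\ 1 & 0 & 0 & 0 \end{bmatrix}, \quad
\gamma(1) = \begin{bmatrix} 0 & 1 & 0 & 0 \\ 0 & 0 & 1 & 0 \\ 0 & 0 & 0 & 1 \\ 1 & -1 & 1 & 1 \end{bmatrix}.
\]
Once this is checked, Lemma~\ref{Lem1} and Theorem~\ref{Thm1} apply verbatim: the bottom row $\begin{bmatrix} 1 & -1 & 1 & 1 \end{bmatrix}$ of $\gamma(1)$ is the coefficient vector of the recurrence $S(n) = S(n-1)+S(n-2)-S(n-3)+S(n-4)$ for $n\geq 4$, and $w$ supplies the initial values $S(0)=S(1)=S(2)=S(3)=1$, so Theorem~\ref{Thm1} identifies $T(n)=\sum_{i\in\mathcal{L}(n)}S(i)$ as the run length transform of this $S$. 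A short check, $S(0),\dots,S(10) = 1,1,1,1,2,3,5,7,11,16,25$, confirms that this is the sequence named in the statement.

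The one place that needs care is passing from the minimized \texttt{Walnut} output to the canonical companion form above: minimization yields a representation of minimal rank but not in a prescribed basis, so if the raw minimized quadruple is not already in the shape of the construction preceding Theorem~\ref{Thm1}, one must conjugate by a suitable invertible matrix $P$ (replacing $\gamma(i)$ by $P^{-1}\gamma(i)P$, $v$ by $vP$, and $w$ by $P^{-1}w$) to bring it there before invoking Theorem~\ref{Thm1}. I expect this to be the main obstacle, though --- as with all of Wu's sequences in Section~\ref{sec2} --- a minor one in practice: the quartic $x^4-x^3-x^2+x-1$ is irreducible over $\mathbb{Q}$, so the minimal rank really is $4$ and the minimal representation is unique up to conjugacy, which guarantees that such a $P$ exists. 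No analytic difficulty arises beyond the finite \texttt{Walnut} computation.
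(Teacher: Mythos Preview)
Your proposal is correct and follows essentially the same approach as the paper: feed the sum into \texttt{Walnut}, minimize, observe that the resulting rank-$4$ quadruple is exactly the canonical companion form with $w=(1,1,1,1)^T$ and bottom row $(1,-1,1,1)$, and invoke Lemma~\ref{Lem1} and Theorem~\ref{Thm1}. Your extra remark about possibly needing a conjugation to reach the canonical basis is a fair caveat that the paper simply omits; in practice the minimized output already had the required shape, so no basis change was needed.
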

\begin{proof}
     Putting the above sequence into Walnut  and then minimizing it, we get the quadruple $(v, \gamma(0), \gamma(1), w)$, with:
    \begin{align*} v =
        \begin{bmatrix}
            1 & 0 & 0 & 0
        \end{bmatrix}, \;
    w = 
        \begin{bmatrix}
            1 \\ 1 \\ 1 \\ 1
        \end{bmatrix}, \;
    \gamma(0) = 
        \begin{bmatrix}
            1 & 0 & 0 & 0 \\ 1 & 0 & 0 & 0 \\ 1 & 0 & 0 & 0 \\ 1 & 0 & 0 & 0
        \end{bmatrix}, \;
    \gamma(1) = 
        \begin{bmatrix}
            0 & 1 & 0 & 0 \\ 0 & 0 & 1 & 0 \\ 0 & 0 & 0 & 1 \\ 1 & -1 & 1 & 1
        \end{bmatrix}
    \end{align*}
    Thus by Lemma~\ref{Lem1} and Theorem~\ref{Thm1}, $T(n)$ is the run length transform of the sequence defined by $S(n) = S(n-1) + S(n-2) - S(n-3) + S(n-4)$ for $n\geq4$, with $S(0) = S(1) = S(2) = S(3) = 1$; i.e., the sequence $1,1,1,1,2,3,5,7,11,16,25,\dots$.
\end{proof}
\begin{thm} \label{new RLT2}
    Let $\displaystyle T(n) = \sum^n_{k=0}\Bigg[\dbinom{n+5k}{2k}\dbinom{n}{k}\pmod{2}\Bigg]$. Then $T(n)$ is the run length transform of the sequence generated by $S(n) = S(n-1) + S(n-2) - S(n-3) + S(n-4)$, with $S(0) = 1$, $S(1) = S(2) = 2$, $S(3) = 3$.
\end{thm}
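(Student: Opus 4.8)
The proof will follow the same template as the theorems of Section~\ref{sec2} and Theorem~\ref{new RLT1}: obtain a linear representation for $T(n)$ with \texttt{Walnut}, minimize it, and then invoke Theorem~\ref{Thm1}. The first step is to run the \texttt{eval} command of Section~\ref{sec1} with the automaton \texttt{BINOM2} of Figure~\ref{binom2-aut}, using the two binomial coefficients $\dbinom{n+5k}{2k}$ and $\dbinom{n}{k}$ together with the constraint $k\le n$; this produces some linear representation $(v',\gamma'(0),\gamma'(1),w')$ for $T(n)$, to which I then apply the Sch\"utzenberger minimization algorithm.

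The second step is to verify that the minimized quadruple is
\[
v = \begin{bmatrix} 1 & 0 & 0 & 0 \end{bmatrix},\quad
w = \begin{bmatrix} 1 \\ 2 \\ 2 \\ 3 \end{bmatrix},\quad
\gamma(0) = \begin{bmatrix} 1 & 0 & 0 & 0 \\ 2 & 0 & 0 & 0 \\ 2 & 0 & 0 & 0 \\ 3 & 0 & 0 & 0 \end{bmatrix},\quad
\gamma(1) = \begin{bmatrix} 0 & 1 & 0 & 0 \\ 0 & 0 & 1 & 0 \\ 0 & 0 & 0 & 1 \\ 1 & -1 & 1 & 1 \end{bmatrix},
\]
so that it has exactly the shape of the linear representation constructed immediately after Definition~\ref{Dfn1}, with $r=3$, recurrence coefficients $(d_0,d_1,d_2,d_3)=(1,1,-1,1)$, and initial data $(c_1,c_2,c_3)=(2,2,3)$. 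Once this is checked, Lemma~\ref{Lem1} and Theorem~\ref{Thm1} apply word for word and identify $T(n)=v\gamma([n]_2)w$ as the run length transform of the sequence $S$ defined by $S(n)=S(n-1)+S(n-2)-S(n-3)+S(n-4)$ for $n\ge 4$ with $S(0)=1$, $S(1)=S(2)=2$, $S(3)=3$, i.e.\ of $1,2,2,3,4,7,10,16,\dots$, which is the statement.

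The main obstacle is the same mild point implicit in every theorem of Section~\ref{sec2}: the minimization algorithm only returns a representation of minimal rank \emph{up to a change of basis}, so a priori \texttt{Walnut} might output a conjugate $P^{-1}\gamma(b)P$ of the displayed quadruple rather than the quadruple itself, and Theorem~\ref{Thm1} would then not apply directly. Should that happen, I would recover $P$ by matching the row vectors $v'\gamma'(c)$, for short binary words $c$, against the values of $S$ predicted by Lemma~\ref{Lem1}, and then conjugate the representation into the companion-type normal form above; one also has to confirm that $4$ really is the minimal rank, so that no shorter recurrence is concealed. In practice, exactly as for the earlier theorems, the minimized output is already in this normal form, so the substance of the proof is the machine computation rather than any delicate argument.
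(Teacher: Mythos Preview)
Your proposal is correct and follows essentially the same approach as the paper: compute the linear representation with \texttt{Walnut}, minimize it to obtain the displayed rank-$4$ quadruple, and then invoke Lemma~\ref{Lem1} and Theorem~\ref{Thm1}. Your additional remark about the possible need to conjugate into companion form is a reasonable caveat that the paper leaves implicit, but in practice the minimized output already has the required shape.
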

\begin{proof}
     Putting the above sequence into Walnut  and then minimizing it, we get the quadruple $(v, \gamma(0), \gamma(1), w)$, with:
    \begin{align*} v =
        \begin{bmatrix}
            1 & 0 & 0 & 0
        \end{bmatrix}, \;
    w = 
        \begin{bmatrix}
            1 \\ 2 \\ 2 \\ 3
        \end{bmatrix}, \;
    \gamma(0) = 
        \begin{bmatrix}
            1 & 0 & 0 & 0 \\ 2 & 0 & 0 & 0 \\ 2 & 0 & 0 & 0 \\ 3 & 0 & 0 & 0
        \end{bmatrix}, \;
    \gamma(1) = 
        \begin{bmatrix}
            0 & 1 & 0 & 0 \\ 0 & 0 & 1 & 0 \\ 0 & 0 & 0 & 1 \\ 1 & -1 & 1 & 1
        \end{bmatrix}
    \end{align*}
    Thus by Lemma~\ref{Lem1} and Theorem~\ref{Thm1}, $T(n)$ is the run length transform of the sequence defined by $S(n) = S(n-1) + S(n-2) - S(n-3) + S(n-4)$ for $n\geq4$, with $S(0) = 1$, $S(1) = S(2) = 2$, $S(3) = 3$.
\end{proof}
Note that the previous two sequences are generated by the same rule, and differ only by their starting terms. 
\begin{thm} \label{new RLT3}
    Let $\displaystyle T(n) = \sum^n_{k=0}\Bigg[\dbinom{-n+7k}{n+k}\dbinom{n}{k}\pmod{2}\Bigg]$.
    Then $T(n)$ is the run length transform of the Padovan numbers (OEIS~A000931) starting with offset $5$, i.e. the sequence $1,1,1,2,2,3,4,5,\dots$
\end{thm}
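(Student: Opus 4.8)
The plan is to apply the recipe of Section~\ref{sec2} verbatim: compute a linear representation for $T(n)$ with \texttt{Walnut}, minimize it, and read off from the resulting quadruple $(v,\gamma(0),\gamma(1),w)$ the linear recurrence whose run length transform is $T(n)$, as licensed by Lemma~\ref{Lem1} and Theorem~\ref{Thm1}.

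The one new feature here is that the upper index $-n+7k$ of the first binomial can be negative, so it cannot be handed directly to the automaton \texttt{BINOM2}. This is harmless: if $7k<n$ then $-n+7k<0\le n+k$, so $\binom{-n+7k}{n+k}=0$ by the convention $\binom{a}{b}=0$ for $a<b$, and such a $k$ contributes nothing to the sum. Thus I would add the guard $7k\ge n$ (which both restricts to the range where the top binomial can be nonzero and keeps the automaton's first argument nonnegative) and feed \texttt{Walnut} the command
\begin{verbatim}
eval T n "?msd_2 (k <= n) & (7*k >= n) &
    BINOM2[7*k-n][n+k]=@1 & BINOM2[n][k]=@1":
\end{verbatim}
The \texttt{eval} command returns a (possibly non-minimal) linear representation for $T(n)$, which I would then run through the Schutzenberger minimization.

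I expect the minimized representation to have rank $3$ and to be exactly
\begin{align*}
v = \begin{bmatrix} 1 & 0 & 0 \end{bmatrix}, \;
w = \begin{bmatrix} 1 \\ 1 \\ 1 \end{bmatrix}, \;
\gamma(0) = \begin{bmatrix} 1 & 0 & 0 \\ 1 & 0 & 0 \\ 1 & 0 & 0 \end{bmatrix}, \;
\gamma(1) = \begin{bmatrix} 0 & 1 & 0 \\ 0 & 0 & 1 \\ 1 & 1 & 0 \end{bmatrix}.
\end{align*}
This is exactly the form of Definition~\ref{Dfn1}: the bottom row of $\gamma(1)$ encodes the recurrence $S(n)=S(n-2)+S(n-3)$ and $w$ encodes the initial values $S(0)=S(1)=S(2)=1$. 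By Theorem~\ref{Thm1}, $T(n)$ is therefore the run length transform of this order-$3$ linear recurrence; iterating it gives $1,1,1,2,2,3,4,5,7,9,\dots$, which is the Padovan sequence A000931 read from offset $5$, and the theorem follows.

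The only genuine obstacle is the one shared by all the theorems in this paper: the minimization algorithm is only guaranteed to output a representation of minimal rank in \emph{some} basis, so one must confirm that \texttt{Walnut} actually delivers the ``companion'' quadruple above and not merely a conjugate of it. Should that fail, I would either exhibit an explicit change of basis conjugating the output to the displayed quadruple, or verify directly that the displayed $(v,\gamma(0),\gamma(1),w)$ has rank $3$ and satisfies $T(n)=v\gamma([n]_2)w$ on a range of $n$ long enough to force agreement with the \texttt{Walnut} representation. In practice, as in Sections~\ref{sec2} and~\ref{sec3}, I expect the minimization to land on the companion form directly, so that only reading off the matrices is needed.
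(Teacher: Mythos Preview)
Your proposal is correct and follows essentially the same approach as the paper: compute a linear representation with \texttt{Walnut}, minimize it, and read off from the resulting quadruple the Padovan recurrence $S(n)=S(n-2)+S(n-3)$ with $S(0)=S(1)=S(2)=1$, invoking Lemma~\ref{Lem1} and Theorem~\ref{Thm1}. Your explicit handling of the possibly negative argument $-n+7k$ via the guard $7k\ge n$ and your remarks on the basis-dependence of minimization are welcome elaborations that the paper leaves implicit, but the underlying method is identical.
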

\begin{proof}
     Putting the above sequence into Walnut  and then minimizing it, we get the quadruple $(v, \gamma(0), \gamma(1), w)$, with:
    \begin{align*} v =
        \begin{bmatrix}
            1 & 0 & 0
        \end{bmatrix}, \;
    w = 
        \begin{bmatrix}
            1 \\ 1 \\ 1
        \end{bmatrix}, \;
    \gamma(0) = 
        \begin{bmatrix}
            1 & 0 & 0\\ 1 & 0 & 0 \\ 1 & 0 & 0
        \end{bmatrix}, \;
    \gamma(1) = 
        \begin{bmatrix}
            0 & 1 & 0 \\ 0 & 0 & 1 \\ 1 & 1 & 0
        \end{bmatrix}
    \end{align*}
    Thus by Lemma~\ref{Lem1} and Theorem~\ref{Thm1}, $T(n)$ is the run length transform of the sequence defined by $S(n) = S(n-2) + S(n-3)$ for $n\geq3$, with $S(0) = S(1) = S(2) = 1$. This is precisely the Padovan numbers starting with offset $5$.
\end{proof}
\begin{thm} \label{new RLT4}
    Let $\displaystyle T(n) = \sum^n_{k=0}\Bigg[\dbinom{n+7k}{3n+k}\dbinom{n}{k}\pmod{2}\Bigg]$.
    Then $T(n)$ is the run length transform of the Padovan numbers (OEIS~A000931) starting with offset $3$, i.e. the sequence $1,0,1,1,1,2,2,3,\dots$
\end{thm}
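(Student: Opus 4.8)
The plan is to follow the same template as the proofs in Section~\ref{sec2} and the earlier theorems of this section. First I would run the \texttt{eval} command of Section~\ref{sec1} with $a_1=1$, $a_2=7$, $a_3=3$, $a_4=1$, so that \texttt{Walnut} constructs (from the \texttt{BINOM2} automaton of Figure~\ref{binom2-aut}) an automaton recognizing the pairs $(n,k)$ with $k\le n$ for which both $\binom{n+7k}{3n+k}$ and $\binom{n}{k}$ are odd, and returns a linear representation $(v,\gamma(0),\gamma(1),w)$ with $T(n)=v\gamma([n]_2)w$. Since \texttt{Walnut}'s raw output need not have minimal rank, I would then apply the minimization procedure described in Section~\ref{sec1} (the Schutzenberger algorithm, \cite[Section~2.3]{BR11}).

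The Padovan recurrence $S(n)=S(n-2)+S(n-3)$ has order $3$, so I expect the minimized representation to have rank $3$, and to come out as
\[
v=\begin{bmatrix}1&0&0\end{bmatrix},\quad
w=\begin{bmatrix}1\\0\\1\end{bmatrix},\quad
\gamma(0)=\begin{bmatrix}1&0&0\\0&0&0\\1&0&0\end{bmatrix},\quad
\gamma(1)=\begin{bmatrix}0&1&0\\0&0&1\\1&1&0\end{bmatrix}.
\]
This is exactly the quadruple that Theorem~\ref{Thm1} attaches to the sequence of Definition~\ref{Dfn1} with $r=2$, recurrence coefficients $d_0=0$, $d_1=d_2=1$ (the bottom row of $\gamma(1)$), and initial values $S(0)=1$, $S(1)=0$, $S(2)=1$ (the entries of $w$). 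Hence, by Lemma~\ref{Lem1} and Theorem~\ref{Thm1}, $T(n)$ is the run length transform of the sequence defined by $S(n)=S(n-2)+S(n-3)$ for $n\ge3$ with $S(0)=1$, $S(1)=0$, $S(2)=1$, i.e., the Padovan numbers with offset $3$: $1,0,1,1,1,2,2,3,\dots$. Note that, as with Theorems~\ref{new RLT1} and~\ref{new RLT2}, this uses the same recurrence rule as Theorem~\ref{new RLT3} (the matrix $\gamma(1)$ is identical); only the initial data $w$, and hence the first column of $\gamma(0)$, change.

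The one step that genuinely requires care — and the main obstacle — is that the Schutzenberger algorithm is only guaranteed to return \emph{some} representation of minimal rank, not one already in the companion form of Definition~\ref{Dfn1}. If \texttt{Walnut}'s minimized output instead turns out to be a conjugate $(vP,\,P^{-1}\gamma(0)P,\,P^{-1}\gamma(1)P,\,P^{-1}w)$ of the displayed quadruple for some invertible integer matrix $P$, I would exhibit $P$ explicitly and check that conjugation restores the companion form; this is a bounded $3\times3$ linear-algebra computation. Alternatively, one can sidestep the issue entirely: verify directly that the displayed rank-$3$ quadruple satisfies $v\gamma([n]_2)w=\sum_{k=0}^{n}\left[\binom{n+7k}{3n+k}\binom{n}{k}\bmod 2\right]$ for all $n$ in a finite initial segment whose length is bounded in terms of the ranks of the two representations being compared, which by the theory of linear representations (\cite{AS03,BR11}) forces equality of the two sequences; then Theorem~\ref{Thm1} supplies the run length transform description. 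Either way, once the quadruple is identified the conclusion is immediate.
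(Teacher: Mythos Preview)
Your approach is essentially identical to the paper's: run \texttt{Walnut}, minimize, recognize the resulting quadruple as the companion-form representation of Theorem~\ref{Thm1}, and read off the Padovan recurrence and initial data. The only discrepancy is in $\gamma(0)$: the paper prints first column $(1,1,1)^T$ whereas you give $(1,0,1)^T$; yours is the one dictated by Definition~\ref{Dfn1} (first column equal to $w$), and indeed the paper's printed matrix would give $v\gamma(1)\gamma(0)w=1$ for $n=2$ rather than the correct value $T(2)=0$, so the paper's entry appears to be a typo. Your extra remarks about possibly needing to conjugate into companion form are a welcome refinement the paper does not make explicit.
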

\begin{proof}
     Putting the above sequence into Walnut  and then minimizing it, we get the quadruple $(v, \gamma(0), \gamma(1), w)$, with:
    \begin{align*} v =
        \begin{bmatrix}
            1 & 0 & 0
        \end{bmatrix}, \;
    w = 
        \begin{bmatrix}
            1 \\ 0 \\ 1
        \end{bmatrix}, \;
    \gamma(0) = 
        \begin{bmatrix}
            1 & 0 & 0\\ 1 & 0 & 0 \\ 1 & 0 & 0
        \end{bmatrix}, \;
    \gamma(1) = 
        \begin{bmatrix}
            0 & 1 & 0 \\ 0 & 0 & 1 \\ 1 & 1 & 0
        \end{bmatrix}
    \end{align*}
    Thus by Lemma~\ref{Lem1} and Theorem~\ref{Thm1}, $T(n)$ is the run length transform of the sequence defined by $S(n) = S(n-2) + S(n-3)$ for $n\geq3$, with $S(0) =1$, $S(1) = 0$, $S(2) = 1$. This is precisely the Padovan numbers starting with offset $3$.
\end{proof}
\begin{thm} \label{new RLT5}
    Let $\displaystyle T(n) = \sum^n_{k=0}\Bigg[\dbinom{6k}{n+3k}\dbinom{n}{k}\pmod{2}\Bigg]$. Then $T(n)$ is the run length transform of the sequence given by alternating between $1$ and the natural numbers, i.e. the sequence $1,1,1,2,1,3,1,4,\dots$
\end{thm}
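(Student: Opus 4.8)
The plan is to follow the same recipe used in Theorems~\ref{CWW thm6}--\ref{new RLT4}: compute a linear representation for $T(n)$ with \texttt{Walnut}, minimize it, and then read off the linear recurrence sequence whose run length transform it computes. The first step is to express the summand using the automaton \texttt{BINOM2} of Figure~\ref{binom2-aut}, for which \texttt{BINOM2[x][y]=@1} holds precisely when $\binom{x}{y}\equiv 1\pmod 2$. Here the sum has the shape \eqref{T(n)} with $a_1=0$, $a_2=6$, $a_3=1$, $a_4=3$ (note $a_1+a_2=6\ge 0$ and $a_3+a_4=4\ge 0$), so a linear representation is guaranteed to exist, and I would obtain one by running
\begin{verbatim}
eval T n "?msd_2 (k <= n) & BINOM2[6*k][n+3*k]=@1 & BINOM2[n][k]=@1":
\end{verbatim}
As observed in Section~\ref{sec1}, this already shows $T(n)$ is $2$-regular; the output is a quadruple $(v,\gamma(0),\gamma(1),w)$ with $T(n)=v\gamma([n]_2)w$.

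The second step is to minimize this representation with the Schützenberger algorithm, as in the earlier proofs. I expect the minimal representation to have rank $4$ (the generating function of the target sequence has simple form, with four nonzero coefficients when decomposed over $1^n$, $n1^n$, $(-1)^n$, $n(-1)^n$, so no lower-rank representation exists) and to come out in the companion-matrix form of Definition~\ref{Dfn1}, namely
\begin{align*}
v &= \begin{bmatrix} 1 & 0 & 0 & 0 \end{bmatrix}, \;
w = \begin{bmatrix} 1 \\ 1 \\ 1 \\ 2 \end{bmatrix}, \;
\gamma(0) = \begin{bmatrix} 1 & 0 & 0 & 0 \\ 1 & 0 & 0 & 0 \\ 1 & 0 & 0 & 0 \\ 2 & 0 & 0 & 0 \end{bmatrix}, \;
\gamma(1) = \begin{bmatrix} 0 & 1 & 0 & 0 \\ 0 & 0 & 1 & 0 \\ 0 & 0 & 0 & 1 \\ -1 & 0 & 2 & 0 \end{bmatrix}.
\end{align*}
Granting this, Lemma~\ref{Lem1} and Theorem~\ref{Thm1} apply verbatim: reading the coefficients off the bottom row of $\gamma(1)$ and the initial terms off $w$, $T(n)$ is the run length transform of the sequence $S$ defined by $S(0)=S(1)=S(2)=1$, $S(3)=2$, and $S(n)=2S(n-2)-S(n-4)$ for $n\ge 4$.

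The final step is to recognize $S$ as the interleaving of the constant sequence $1,1,1,\dots$ with the natural numbers $1,2,3,\dots$. This is a short induction: the recurrence $S(n)=2S(n-2)-S(n-4)$ relates only terms of equal parity, so it decouples into $S(2m)=2S(2m-2)-S(2m-4)$ and $S(2m+1)=2S(2m-1)-S(2m-3)$; feeding in the four initial values gives $S(2m)=1$ and $S(2m+1)=m+1$ for all $m\ge 0$, which is exactly the sequence $1,1,1,2,1,3,1,4,\dots$ of the statement. The only step I view as a genuine obstacle is the minimization: the Schützenberger algorithm is guaranteed only to return a representation of minimal \emph{rank}, not necessarily one literally in the companion form above, so in principle one might first have to exhibit an explicit change of basis carrying \texttt{Walnut}'s minimized output into that form before Theorem~\ref{Thm1} can be invoked. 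In every instance treated in this paper this turns out to be unnecessary, and I expect the same here; barring that, the argument is entirely mechanical.
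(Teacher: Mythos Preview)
Your proposal is correct and follows exactly the paper's own proof: run \texttt{Walnut}, minimize, obtain the same rank-$4$ companion-form quadruple $(v,\gamma(0),\gamma(1),w)$, and invoke Lemma~\ref{Lem1} and Theorem~\ref{Thm1} to identify $S(n)=2S(n-2)-S(n-4)$ with the stated initial values. Your added induction verifying that this recurrence produces the interleaving $1,1,1,2,1,3,\dots$ and your caveat about the minimization possibly not landing in companion form are reasonable elaborations, but the paper simply asserts the latter step goes through (as it does here) and identifies the sequence directly.
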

\begin{proof}
     Putting the above sequence into Walnut  and then minimizing it, we get the quadruple $(v, \gamma(0), \gamma(1), w)$, with:
    \begin{align*} v =
        \begin{bmatrix}
            1 & 0 & 0 & 0
        \end{bmatrix}, \;
    w = 
        \begin{bmatrix}
            1 \\ 1 \\ 1 \\ 2
        \end{bmatrix}, \;
    \gamma(0) = 
        \begin{bmatrix}
            1 & 0 & 0 & 0 \\ 1 & 0 & 0 & 0 \\ 1 & 0 & 0 & 0 \\ 2 & 0 & 0 & 0
        \end{bmatrix}, \;
    \gamma(1) = 
        \begin{bmatrix}
            0 & 1 & 0 & 0 \\ 0 & 0 & 1 & 0 \\ 0 & 0 & 0 & 1 \\ -1 & 0 & 2 & 0
        \end{bmatrix}
    \end{align*}
    Thus by Lemma~\ref{Lem1} and Theorem~\ref{Thm1}, $T(n)$ is the run length transform of the sequence defined by $S(n) = 2S(n-2) - S(n-4)$ for $n\geq4$, with $S(0) = S(1) = S(2) = 1$, $S(3) = 2$, which is the sequence alternating between $1$ and the natural numbers.
\end{proof}
\begin{thm} \label{new RLT6}
    Let $\displaystyle T(n) = \sum^n_{k=0}\Bigg[\dbinom{-2n+8k}{n+k}\dbinom{n}{k}\pmod{2}\Bigg]$.
    Then $T(n)$ is the run length transform of the sequence with period $1,1,0$. 
\end{thm}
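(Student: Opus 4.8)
The plan is to follow the same recipe used throughout Sections~\ref{sec2} and~\ref{sec3}: feed the sum into \texttt{Walnut} to obtain a linear representation for $T(n)$, minimize it, and then recognize the resulting quadruple as the canonical linear representation of a run length transform in the sense of Theorem~\ref{Thm1}. First I would check that the sum fits the template \eqref{T(n)}: here $a_1=-2$, $a_2=8$, $a_3=1$, $a_4=1$, so $a_1+a_2=6\ge 0$ and $a_3+a_4=2\ge 0$, and the \texttt{eval} command of Section~\ref{sec1} applies (a value of $k$ contributes only when $-2n+8k$ is a nonnegative integer, which \texttt{Walnut} enforces automatically under \texttt{?msd\_2}).

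Running
\begin{verbatim}
eval T n "?msd_2 (k <= n) &
    BINOM2[-2*n+8*k][n+k]=@1 & BINOM2[n][k]=@1":
\end{verbatim}
and then minimizing the output, I expect a rank-$3$ representation equivalent to
\[
v=\begin{bmatrix}1&0&0\end{bmatrix},\
w=\begin{bmatrix}1\\1\\0\end{bmatrix},\
\gamma(0)=\begin{bmatrix}1&0&0\\1&0&0\\0&0&0\end{bmatrix},\
\gamma(1)=\begin{bmatrix}0&1&0\\0&0&1\\1&0&0\end{bmatrix},
\]
which has exactly the shape prescribed in Definition~\ref{Dfn1} and in the discussion following Theorem~\ref{Thm1}, with $r=2$, coefficients $(d_0,d_1,d_2)=(0,0,1)$, and initial data $(c_1,c_2)=(1,0)$. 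By Lemma~\ref{Lem1} and Theorem~\ref{Thm1}, $T(n)$ is then the run length transform of the sequence $S$ defined by $S(n)=S(n-3)$ for $n\ge 3$ with $S(0)=S(1)=1$ and $S(2)=0$, i.e., the sequence $1,1,0,1,1,0,\dots$ with period $1,1,0$, which is the claim. As an additional sanity check I would compute the first several values of $T(n)$ directly from the \texttt{Walnut} automaton and verify that they agree with $\sum_{i\in\mathcal{L}(n)}S(i)$ for this periodic $S$.

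The only real difficulty is bookkeeping: \texttt{Walnut}'s raw output need not be minimal, and even the minimized representation is determined only up to a change of basis, so it may not appear literally in the companion form displayed above. In that case one would conjugate the minimized triple $(v,M_0,M_1,w)$ by a suitable invertible matrix to put $M_1$ into companion form and $M_0$ into the rank-one shape of \eqref{thm1eq1}, or, equivalently, simply read off the minimal linear recurrence satisfied by $\bigl(vM_1^nw\bigr)_{n\ge0}$ together with its first three values and invoke Theorem~\ref{Thm1} directly. Since the target sequence is periodic of primitive period $3$, its minimal linear representation has rank exactly $3$, so no surprise in the rank is anticipated.
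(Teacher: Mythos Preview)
Your proposal is correct and follows essentially the same approach as the paper: compute a linear representation with \texttt{Walnut}, minimize it, and read off the canonical run-length-transform quadruple, invoking Lemma~\ref{Lem1} and Theorem~\ref{Thm1}. The paper in fact reports exactly the rank-$3$ representation you anticipate, so your extra discussion about conjugating to companion form, while sensible, turns out to be unnecessary here.
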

\begin{proof}
     Putting the above sequence into Walnut  and then minimizing it, we get the quadruple $(v, \gamma(0), \gamma(1), w)$, with:
    \begin{align*} v =
        \begin{bmatrix}
            1 & 0 & 0
        \end{bmatrix}, \;
    w = 
        \begin{bmatrix}
            1 \\ 1 \\ 0
        \end{bmatrix}, \;
    \gamma(0) = 
        \begin{bmatrix}
            1 & 0 & 0\\ 1 & 0 & 0 \\ 0 & 0 & 0
        \end{bmatrix}, \;
    \gamma(1) = 
        \begin{bmatrix}
            0 & 1 & 0 \\ 0 & 0 & 1 \\ 1 & 0 & 0
        \end{bmatrix}
    \end{align*}
    Thus by Lemma~\ref{Lem1} and Theorem~\ref{Thm1}, $T(n)$ is the run length transform of the sequence defined by $S(n) = S(n-3)$ for $n\geq3$, with $S(0) = S(1) = 1$, $S(2) = 0$, which is the sequence with period $1,1,0$.
\end{proof}

\section{Average value of select run length transforms} \label{sec4}
One of the advantages of using linear representations to describe the run length transforms $T(n)$ that we have given
in the previous sections is that we can easily compute the average value of $T(n)$ in the interval $2^r \leq n < 2^{r+1}$
for $r \geq 0$.  The method is described, with many examples, in Shallit~\cite[Sections~9.8--9.11]{Sha22}.

We will give the details of this calculation for the sequence given in Theorem~\ref{CWW thm6}, and simply state the rest.
Let $(v, \gamma(0), \gamma(1), w)$ be as in Theorem~\ref{CWW thm6}, and define
\begin{equation*}
    M = \gamma(0) + \gamma(1) = 
    \begin{bmatrix}
        1 & 0 \\ 1 & 0
    \end{bmatrix}
    +
    \begin{bmatrix}
        0 & 1 \\ 1 & 1
    \end{bmatrix}
    =
    \begin{bmatrix}
        1 & 1 \\ 2 & 1
    \end{bmatrix}.
\end{equation*}
Let $g(r)$ denote the sum of the first $2^r$ terms of $T(n)$.  Then
\begin{align*}
    g(r) &= \sum_{0\leq n<2^r}T(n)\\
    &= \sum_{0\leq n<2^r}v\gamma([n]_2)w\\
    &= v\left(\sum_{0\leq n<2^r}\gamma([n]_2)\right)w\\
    &= v\left(\sum_{z \in \{0,1\}^n}\gamma(z)\right)w\\
    &= v\left(\gamma(0)+\gamma(1)\right)^n w\\
    &= vM^n w.
\end{align*}    
The minimal polynomial for $M$ is
\begin{align*}
    p(x) = x^2 -2x -1 = (x-(1+\sqrt{2}))(x-(1-\sqrt{2})).
\end{align*}
It follows that $g(r)$ is given by the exponential polynomial
\begin{align*}
    g(r) = c_1(1+\sqrt{2})^r + c_2(1-\sqrt{2})^r,
\end{align*}
for some coefficients $c_1, c_2$, which we determine by solving the linear system
\begin{align*}
    g(0) &= 1 = c_1 + c_2\\
    g(1) &= 2 = c_1(1+\sqrt{2}) + c_2(1-\sqrt{2}).
\end{align*}
We get
\begin{align*}
    c_1 = \frac{1+\sqrt{2}}{2\sqrt{2}}, \quad c_2 = -\frac{1-\sqrt{2}}{2\sqrt{2}},
\end{align*}
and hence,
\begin{align*}
    g(r) &=  \frac{1+\sqrt{2}}{2\sqrt{2}}(1+\sqrt{2})^r - \frac{1-\sqrt{2}}{2\sqrt{2}}(1-\sqrt{2})^r \\
    &= \frac{1}{2\sqrt{2}}\left((1+\sqrt{2})^{r+1} - (1-\sqrt{2})^{r+1}\right).
\end{align*}
Next, we want to find $g(r+1) - g(r)$, which is the sum of the terms of $T(n)$ in the range $2^r\leq n < 2^{r+1}$:
\begin{align*}
    g(r+1) - g(r) &= \frac{1}{2\sqrt{2}}\left(\left((1+\sqrt{2})^{r+2} - (1-\sqrt{2})^{r+2}\right) - \left((1+\sqrt{2})^{r+1} - (1-\sqrt{2})^{r+1}\right)\right) \\ 
    &= \left(\left((1+\sqrt{2})^{r+2} - (1+\sqrt{2})^{r+1}\right) - \left((1-\sqrt{2})^{r+2} - (1-\sqrt{2})^{r+1}\right)\right) \\
    &= \frac{1}{2\sqrt{2}}\left((1+\sqrt{2}-1)(1+\sqrt{2})^{r+1} - (1-\sqrt{2}-1)(1-\sqrt{2})^{r+1}\right) \\
    &= \frac{1}{2}\left((1+\sqrt{2})^{r+1} + (1-\sqrt{2})^{r+1}\right).
\end{align*}
Finally, the average value of $T(n)$ in the range $2^r\leq n<2^{r+1}$ is given by
\begin{align*}
    \mu(r) = \frac{g(r+1)-g(r)}{2^r} = \frac{1}{2^{r+1}}\bigg((1+\sqrt{2})^{r+1} + (1-\sqrt{2})^{r+1}\bigg).
\end{align*}

We now summarize the average values for some of Wu's run length transforms (see Section~\ref{sec2}) below,
where the coefficients $a_i$ refer to the parameters in \eqref{T(n)}.
We omit the average value results for the results involving recurrence relations of order $3$ or higher,
since the roots of the associated minimal polynomials can't be displayed as nicely, but the reader
can easily carry out the above method for those sequences as well.
\begin{center}
\begin{tabular}{ |c|c|c| } 
\hline
Reference & $(a_1, a_2, a_3, a_4)$ & Average Value $\mu(r)$ \\
\hline
Thm.~\ref{CWW thm6} & $(1,-1,0,2)$ & $\frac{1}{2^{r+1}}\big((1+\sqrt{2})^{r+1} + (1-\sqrt{2})^{r+1}\big)$\\ 
Thm.~\ref{CWW thm7} & $(0,3,0,1)$ & $\frac{1}{2^{r+1}}\big((2+\sqrt{3})(1+\sqrt{3})^r + (2-\sqrt{3})(1-\sqrt{3})^r\big)$  \\ 
Thm.~\ref{CWW thm8} & $(1,0,0,2)$ & $\frac{1}{2^{2r+2}\sqrt{5}}\big((1+\sqrt{5})^2(3+\sqrt{5})^r-(1-\sqrt{5})^2(3-\sqrt{5})^r\big)$ \\ 
Thm.~\ref{CWW thm9} & $(1,2,0,2)$ & $\frac{\sqrt{2}}{2^{r+1}}\big((1+\sqrt{2})^{r+1} - (1-\sqrt{2})^{r+1}\big)$ \\
Thm.~\ref{CWW thm10} & $(1,1,1,-1)$ & $\frac{1}{2^{2r+2}\sqrt{5}}\big((1+\sqrt{5})(3+\sqrt{5})^{r+1}-(1-\sqrt{5})(3-\sqrt{5})^{r+1}\big)$ \\
\hline
\end{tabular}
\end{center}

\section{Characterizing a family of run length transforms} \label{sec5}
In this section we analyze the family of sequences defined by
$$T_m(n) = \sum^n_{k=0} \left[\binom{2^mk}{n+k}\binom{n}{k}\pmod{2}\right],$$
for $m\geq2$. We claim that $T_m(n)$ is the run length transform of the sequence defined by
$S_m(n) = S_m(n-m)$ for $n\geq m$, with $S_m(0) = 1, S_m(1) = S_m(2) = \dots = S_m(m-1) = 0$,
or, in other words, the sequence with the period of $1$ followed by $m-1$ $0$'s. 

In the proofs that follow, we will be looking for integers $k\leq n$ such that $\binom{2^mk}{n+k}\equiv1\pmod{2}$. If for a particular $k$, there exists $i$ such that the $i^{th}$ bit of $[2^mk]_2$ is $0$, but the $i^{th}$ bit of $[n+k]_2$ is $1$, then $\binom{2^mk}{n+k} \equiv 0\pmod{2}$ by \eqref{lem2eq2}, and so we say the $i^{th}$ bit $\textbf{fails}$.  Here we count bits from right to left, starting with index $1$.

Consider the case where $n = 2^\ell - 1$ for some $\ell \geq 0$. Then $[n]_2 = 1^\ell$, so $\mathcal{L}(n) = \{\ell\}$. Thus $T_m(n) = S_m(\ell)$ in such cases. 
\begin{lem} \label{Lem2}
    Let $n = 2^\ell - 1$ for some $\ell \geq 0$.
    Then \[T_m(n) =
    \begin{cases}
        1 & \text{if } \ell \equiv 0\pmod{m}, \\
        0 & \text{otherwise.}
    \end{cases}\]
\end{lem}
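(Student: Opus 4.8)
The plan is to reduce the lemma to the divisibility claim that, for $\ell\ge1$ and $1\le k\le n$, the coefficient $\binom{2^m k}{n+k}$ is odd if and only if $(2^m-1)k=2^\ell-1$. Granting this, the lemma follows quickly. Since $[n]_2=1^\ell$, Lucas's theorem \eqref{lem2eq1} makes every $\binom{n}{k}$ with $0\le k\le n$ odd, so $T_m(n)$ equals the number of $k\in\{0,\dots,n\}$ for which $\binom{2^m k}{n+k}$ is odd. If $\ell=0$ then $n=0$ and $T_m(0)=1$, consistent with $0\equiv0\pmod m$; if $\ell\ge1$ then $k=0$ never contributes (as $\binom{0}{n}=0$ for $n\ge1$), and by the claim the contributing $k$ are precisely the solutions of $(2^m-1)k=2^\ell-1$ in $[1,n]$. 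Such a solution is unique when it exists, automatically satisfies $k\le n$, and exists exactly when $2^m-1\mid2^\ell-1$, i.e.\ when $m\mid\ell$ (then $k=1+2^m+\cdots+2^{\ell-m}$). So $T_m(n)$ is $1$ if $m\mid\ell$ and $0$ otherwise. One direction of the claim is immediate: $(2^m-1)k=2^\ell-1$ gives $2^m k=n+k$, so $\binom{2^m k}{n+k}=\binom{n+k}{n+k}=1$.

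The content is the other direction, which I would establish by bit-chasing, counting bits from the right as in the rest of the section. Two observations set it up: $[2^m k]_2$ is $[k]_2$ followed by $m$ zeros, and, for $k\ge1$, $n+k=(2^\ell-1)+k=2^\ell+(k-1)$ with $0\le k-1<2^\ell$, so $[n+k]_2$ is a single $1$ in position $\ell+1$ followed by the length-$\ell$ expansion of $k-1$. Suppose $\binom{2^m k}{n+k}$ is odd, i.e.\ no bit fails. Bit $\ell+1$ of $n+k$ is $1$, hence bit $\ell+1$ of $2^m k$ is $1$; since bits $1,\dots,m$ of $2^m k$ are $0$, this forces $\ell\ge m$ and says bit $\ell+1-m$ of $k$ is set. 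Because bits $1,\dots,m$ of $2^m k$ are $0$, also bits $1,\dots,m$ of $n+k$ — which (as $m\le\ell$) coincide with bits $1,\dots,m$ of $k-1$ — must be $0$, so $k=1+2^m t$ for some $t\ge0$ with $k-1=2^m t$, and $k\le n$ forces $t<2^{\ell-m}$. Finally, if bit $s$ of $t$ is set ($s\ge1$) then $s\le\ell-m$, so bit $m+s$ of $n+k$ equals bit $m+s$ of $k-1=2^m t$, namely bit $s$ of $t$, and is therefore $1$; since no bit fails, bit $m+s$ of $2^m k$ — that is, bit $s$ of $k$ — is $1$ as well. Thus $t$ is a bitwise submask of $k=1+2^m t$.

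Unwinding the submask relation $t\subseteq k=1+2^m t$ is the crux. In $k$ the only set bit among positions $1,\dots,m$ is in position $1$, while for $p\ge m+1$ bit $p$ of $k$ equals bit $p-m$ of $t$; hence $t\subseteq k$ forces bits $2,\dots,m$ of $t$ to vanish and forces, for each $p\ge m+1$, that bit $p$ of $t$ being set implies bit $p-m$ of $t$ is set. Iterating this implication — together with the vanishing of bits $2,\dots,m$, which rules out every position $\not\equiv1\pmod m$ — shows that $t$ has set bits only in positions $1,m+1,2m+1,\dots$ and that these form an initial segment of that progression. So $t=\sum_{j=0}^{J}2^{jm}$ for some integer $J\ge-1$ (with $J=-1$ meaning $t=0$), whence $k=1+2^m t=\sum_{j=0}^{J+1}2^{jm}$, whose highest set bit sits in position $(J+1)m+1$. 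Then $k\le n=2^\ell-1$ forces $(J+1)m+1\le\ell$, whereas the requirement that bit $\ell+1-m$ of $k$ be set forces $\ell+1-m=jm+1$ for some $0\le j\le J+1$, i.e.\ $\ell=(j+1)m$; combining, $j+1=J+2$, so $\ell=(J+2)m$ and $k=\sum_{j=0}^{J+1}2^{jm}=(2^{(J+2)m}-1)/(2^m-1)=(2^\ell-1)/(2^m-1)$, i.e.\ $(2^m-1)k=2^\ell-1$, as claimed. I expect the main obstacle to lie in this last bookkeeping: one must track bit positions carefully — especially the special role of position $1$ and of the zero block $2,\dots,m$ of $k$ — to be confident that the submask condition propagates all the way down and yields exactly an initial segment of $\{0,m,2m,\dots\}$ rather than some larger set; the windows $m<\ell<2m$, where ``bit $\ell+1-m$ of $k$ is set'' is already inconsistent with that zero block, serve as a useful sanity check.
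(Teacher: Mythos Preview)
Your argument is correct and follows essentially the same strategy as the paper's proof: both reduce to showing that for $n=2^\ell-1$ the only $k$ with $\binom{2^m k}{n+k}$ odd is $k=(2^\ell-1)/(2^m-1)$, and both obtain this by bit-chasing. Your packaging is slightly different---you replace the paper's ``carry at every position'' observation by the equivalent identity $n+k=2^\ell+(k-1)$, and then encode the bit constraints as the submask relation $t\subseteq 1+2^m t$ with $k=1+2^m t$---but the underlying analysis and the conclusion $[k]_2=(0^{m-1}1)^{\ell/m}$ are the same.
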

\begin{proof}
    We will show that for each $n$ of the form $n = 2^\ell - 1$ with $\ell \equiv 0\pmod{m}$, there is exactly one $k$ such that $\binom{2^mk}{n+k} \equiv 1\pmod{2}$, namely, $k = \frac{2^\ell-1}{2^m-1}$, and that only $n$'s of this form have such a $k$. In fact, $\frac{2^\ell-1}{2^m-1}$ is an integer only when $\ell \equiv 0\pmod{m}$. So such a $k$ only exists under these conditions, and it is this observation which motivates the proof.
    
    First, note that $[n]_2 = 1^\ell$ implies $\binom{n}{k} \equiv 1\pmod{2}$ for $k \leq n$ by \eqref{lem2eq1} and \eqref{lem2eq2}. So $T_m(n) =  \sum^n_{k=0} \big[\binom{2^mk}{n+k}\pmod{2}\big]$. Consider some $k \leq n$, where the length of $[k]_2$ is $r$. If $2^mk < n+k$, then $\binom{2^mk}{n+k} = 0$, so assume $2^mk \geq n+k$, or equivalently, 
    \begin{equation} \label{lem2eq3}
        \frac{n}{2^m-1} \leq k \leq n.
    \end{equation}
    Moreover, this also implies $r \leq \ell$.
    
    Clearly, if $\ell = 0$, then $n = 0$, and $T_m(0) = 1$.
    Suppose that $0 < \ell < m$.  Then by \eqref{lem2eq3}, we have $k > 0$. But the first $m$ bits of $[2^mk]_2$ will all be $0$'s, and $[n+k]_2$ will have length $\ell + 1$, where the $(\ell + 1)^{th}$ bit is $1$. However, we have $\ell+1 \leq m$, which means that the $(\ell + 1)^{th}$ bit of $[2^mk]_2$ is a $0$, and hence the $(\ell + 1)^{th}$ bit fails. So for every $n = 2^\ell - 1$ with $0 < \ell < m$, there is no $k \leq n$ such that $\binom{2^mk}{n+k} \not\equiv 0\pmod{2}$. So the claim holds for $\ell < m$.

    Write
    \begin{align*}
        [k]_2 &= a_\ell\cdots a_1,\\
        [n+k]_2 &= b_{\ell+1}\cdots b_1,\text{ and}\\
        [2^mk]_2 &= c_{\ell+m}\cdots c_1,
    \end{align*}
    where $a_\ell, \ldots, a_{\ell-m+1}$ may be $0$'s if necessary.  Of course, we have $c_{\ell+m}\cdots c_{m+1} = a_\ell\cdots a_1$ and
    $c_m\cdots c_1 = 0^m$.  Furthermore, we suppose there is no index $i$ such that $c_i=0$ and $b_i=1$.

    Note that since $[n]_2=1^\ell$, when adding $n+k$ there is a carry at every position after the first.
    Since $c_m\cdots c_1 = 0^m$, we have $b_m\cdots b_1 = 0^m$.  However, this forces $a_m\cdots a_1 = 0^{m-1}1$.
    Now since $a_m\cdots a_2 = c_{2m}\cdots c_{m+2} = 0^{m-1}$, we have $b_{2m}\cdots b_{m+2} = 0^{m-1}$.
    However, this forces $a_{2m}\cdots a_{m+2} = 0^{m-1}$.  Continuing in this manner, we find that
    $$b_{mi}\cdots b_{m(i-1)+2} = a_{mi}\cdots a_{m(i-1)+2} = 0^{m-1}$$
    for $i=1,\ldots,\lfloor \ell/m \rfloor.$

   We also have $b_{\ell+1}=1$, which implies $\ell \equiv 0 \pmod{m}$, since otherwise we would have
   $\ell+1-m\not\equiv 1\pmod{m}$ and hence $a_{\ell+1-m}=c_{\ell+1}=0$, which is a contradiction.
   Indeed we must have $a_{\ell+1-m}=c_{\ell+1}=1$, which forces $b_{\ell+1-m}=1$.  Continuing in this way
   we find that $b_{\ell+1-2m}=a_{\ell+1-2m}=1$, and so on; i.e., that
   $$a_{m(i-1)+1}=b_{m(i-1)+1}=1$$
   for $i=1,\ldots,\ell/m$.

   We have thus seen that $\binom{2^mk}{n+k} \equiv 1\pmod{2}$ exactly when $\ell \equiv 0 \pmod{m}$ and
   $[k]_2 = (0^{m-1}1)^{\ell/m}$; i.e., that
   \begin{align*}
       k = \sum_{i=0}^{\frac{\ell}{m}-1}2^{mi} = \frac{2^\ell-1}{2^m-1}.
   \end{align*}

\end{proof}
Thus we have that $T_m(2^\ell-1) = S_m(\ell) = 1$. In order to prove $T_m(n)$ is indeed the run length transform of $S_m(n)$, it is sufficient to prove the following:
\begin{thm} \label{Thm2}
Let $n\geq0$. Then
    \begin{equation*} 
    T_m(n) = 
        \begin{cases}
            1 & \text{if each run of 1's in } [n]_2 \text{ has length divisible by $m$}, \\
            0 & \text{otherwise}.
        \end{cases}
    \end{equation*}
\end{thm}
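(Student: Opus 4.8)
The plan is to avoid the bit‑by‑bit case analysis by first collapsing the statement to a single divisibility condition. The first step is to apply Lucas' theorem (in the form \eqref{lem2eq2}) to rewrite $T_m(n)$ as the number of $k\in[0,n]$ such that $[k]_2$ is digit‑wise dominated by $[n]_2$ (equivalently $\binom nk$ is odd) and $[n+k]_2$ is digit‑wise dominated by $[2^mk]_2$ (equivalently $\binom{2^mk}{n+k}$ is odd, i.e.\ no bit fails). The key point will be that any such $k$ in fact satisfies $n+k=2^mk$ exactly. Indeed, since $k$ is dominated by $n$, a carry in the binary addition $n+k$ can only be produced at a position where $[n]_2$ has a $1$, so at most $s_2(n)$ carries occur, where $s_2(\cdot)$ counts $1$'s in the binary expansion; by the standard carry‑count identity (the number of carries equals $s_2(n)+s_2(k)-s_2(n+k)$) this gives $s_2(n+k)\ge s_2(k)=s_2(2^mk)$, while digit‑wise domination forces $s_2(n+k)\le s_2(2^mk)$. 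Hence $n+k$ and $2^mk$ have equally many $1$'s and one is dominated by the other, so they are equal; that is, $n=(2^m-1)k$. The converse is immediate, so $T_m(n)=\#\{k\ge 0:\ k\text{ dominated by }n\text{ and }n=(2^m-1)k\}$, a quantity that is visibly $0$ or $1$. It then remains to show this count is $1$ exactly when every maximal run of $1$'s in $[n]_2$ has length divisible by $m$.

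For the ``if'' direction I would produce the witness explicitly: if the runs of $[n]_2$ occupy intervals $[q_j,q_j+a_j-1]$ with each $a_j$ divisible by $m$, set $k=\sum_j\sum_{t=0}^{a_j/m-1}2^{\,q_j+tm}$. Each term $(2^m-1)2^{\,q_j+tm}$ is the indicator of the length‑$m$ interval $[q_j+tm,\,q_j+(t+1)m-1]$; these intervals tile each run exactly, and the runs are separated by $0$'s, so the sum has no carries and equals $n$, while $k$ is obviously dominated by $n$.

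For the ``only if'' direction I would argue by strong induction on $k:=n/(2^m-1)$ (assumed dominated by $n$). If $k=0$ then $n=0$ and there is nothing to prove; if $k$ is even and positive, write $k=2k'$, so the associated number is $2(2^m-1)k'$, which has the same runs as $(2^m-1)k'$ and is dominated by $k'$, and conclude by induction; so assume $k$ is odd. Since $n\equiv -k\pmod{2^m}$, the bottom $m$ bits of $n$ spell out $2^m-(k\bmod 2^m)$, and for odd $k$ this number has a $1$ in bit $0$ and, in bit $i$ for $1\le i\le m-1$, the complement of bit $i$ of $k$; so domination of $k$ by $n$ forces bits $1,\dots,m-1$ of $k$ to be $0$. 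Hence the bottom $m$ bits of $n$ are $1^m$, $k=1+2^mk''$, and $n=(1^m)_2+2^m(2^m-1)k''$, so $[n]_2=[n'']_2\,1^m$ with $n''=(2^m-1)k''$ and $k''$ (which is smaller than $k$) dominated by $n''$. By induction every run of $[n'']_2$ has length divisible by $m$; appending the block $1^m$ at the bottom either creates a new run of length $m$ (when $n''$ is even) or lengthens the lowest run of $[n'']_2$ by $m$ (when $n''$ is odd), leaving all other runs unchanged, so every run of $[n]_2$ has length divisible by $m$. This is the claimed formula for $T_m(n)$, and since $S_m(a)=1$ exactly when $m\mid a$, it says precisely that $T_m$ is the run length transform of $S_m$.

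The step I expect to be the main obstacle is the collapse in the first paragraph — showing that ``no bit fails'' is equivalent to the exact identity $n+k=2^mk$ — since that is where the carry and digit‑count bookkeeping lives and is what makes the remaining directions short. The only other places demanding care are the reduction to odd $k$ and getting the indexing right when reading off the bottom $m$ bits of $(2^m-1)k$.
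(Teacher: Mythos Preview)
Your proof is correct and takes a genuinely different route from the paper. The paper first establishes the single-run case $n=2^\ell-1$ (Lemma~\ref{Lem2}) by direct bit-position analysis, tracking carries through $[n+k]_2$ and showing that the only surviving $k$ has binary expansion $(0^{m-1}1)^{\ell/m}$; it then argues, run by run, that since $\binom{n}{k}$ odd forces the bits of $k$ to vanish on the zero-blocks of $n$, the same local analysis applies independently to each maximal block of $1$'s in $[n]_2$. Your approach replaces all of this with a single global collapse $n+k=2^mk$: the digit-sum inequality $s_2(n+k)\ge s_2(k)$ (obtained by bounding the number of carries in $n+k$ by $s_2(n)$ when $k$ is dominated by $n$) combined with $s_2(n+k)\le s_2(2^mk)=s_2(k)$ (from domination of $n+k$ by $2^mk$) forces equality of digit sums, and equality under domination forces the integers themselves to coincide. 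This reduces the theorem to the purely arithmetic question of when $n/(2^m-1)$ is an integer dominated by $n$, which you handle by a clean strong induction peeling off $m$ low bits at a time. Your argument is shorter, avoids the run-by-run decomposition entirely, and the carry/digit-sum trick is a reusable idea; the paper's approach, by contrast, is more hands-on and makes the structure of the unique witness $k$ explicit from the outset. One small slip in wording: in the even-$k$ step you write that $2(2^m-1)k'$ ``is dominated by $k'$'', but what you need (and what follows from $k$ being dominated by $n$ after halving) is that $k'$ is dominated by $(2^m-1)k'$, so that the induction hypothesis applies.
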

\begin{proof}
    Let $n\geq0$, where the length of $[n]_2 = \ell$. We've already covered the case where $[n]_2$ is a run of $\ell$ $1$'s, so assume the $j^{th}$ bit of $[n]_2$ is $0$. Then any $k$ with a $1$ in the $j^{th}$ bit will fail, as by \eqref{lem2eq1}:
    \begin{align*}
        &\dbinom{n_j}{k_j} = \dbinom{0}{1} = 0 \\ \implies &\dbinom{n}{k} \equiv 0 \pmod{2} \\
        \implies &\dbinom{2^mk}{n+k}\dbinom{n}{k}\equiv 0 \pmod{2}
    \end{align*}
    So we only need to examine $k \leq n$ such that
    \begin{equation*} 
        \forall i \leq \ell, \;\;\; n_i = 0 \implies k_i = 0
    \end{equation*}

    If $[n]_2$ only has a single run of $1$'s, say $[n]_2 = 1^{\ell_1}0^{t_1}$, where $t_1 \geq 0$,
    then by the proof of Lemma~\ref{Lem2}, we have $\ell_1 \equiv 0 \pmod{m}$ and the only possible
    $k$ that contributes a non-zero value to the sum defining $T_m(n)$ is $[k]_2 = (0^{m-1}1)^{\ell_1/m}0^{t_1}$.
    So suppose $[n]_2$ has two runs of $1$'s, say
    $$[n]_2 = 1^{\ell_2}0^{t_2}1^{\ell_1}0^{t_1},\quad t_2 \geq 1, t_1 \geq 0.$$
    Again, by Lemma~\ref{Lem2} and its proof we have $\ell_1 \equiv 0 \pmod{m}$ and the only possible
    $k$ that contributes a non-zero value to the sum defining $T_m(n)$ has the form
    $$[k]_2 = a_r\cdots a_{r-\ell_2+1}0^{t_2}(0^{m-1}1)^{\ell_1/m}0^{t_1},$$
    where $r=\ell_2+t_2+\ell_1+t_1$.  Note that when adding $n+k$, there is a carry at position
    $\ell_1+t_1+1$ but there is no carry at position $r-\ell_2+1$.  Furthermore,
    since $t_2 \geq 1$, there are at least $m$ $0$'s to the right of $a_r\cdots a_{r-\ell_2+1}$ in $[k]_2$.
    We can therefore apply the same analysis from the proof of Lemma~\ref{Lem2} to the run
    $1^{\ell_2}$ in $[n]_2$ and we find that $\ell_2 \equiv 0 \pmod{m}$ and
    $$[k]_2 = (0^{m-1}1)^{\ell_2/m}0^{t_2}(0^{m-1}1)^{\ell_1/m}0^{t_1}.$$

    We can continue this argument if $[n]_2$ has more than two runs of $1$'s and we find
    $T_m(n)$ is non-zero only when
    $$[n]_2 = 1^{\ell_s}0^{t_s}\cdots 1^{\ell_1}0^{t_1},\quad t_s \geq 1,\ldots,t_2 \geq 1, t_1 \geq 0,$$    
    for some $s$, and $\ell_i \equiv 0 \pmod{m}$ for $i=1,\ldots,s$.  Furthermore, for such $n$, we have $T_m(n)=1$, since the only $k$ that contributes a non-zero value to
    the sum defining $T_m(n)$ has the form
    $$[k]_2 = (0^{m-1}1)^{\ell_s/m}0^{t_s}\cdots(0^{m-1}1)^{\ell_1/m}0^{t_1}.$$

\end{proof}

The sequence $T_2$ can be viewed as a variant of the \emph{Baum--Sweet sequence} (OEIS~A086747)~\cite{BS76}, which is the sequence $(BS(n))_{n \geq 0}$
defined by
    \[
    BS(n) = 
        \begin{cases}
            1 & \text{if each run of 0's in } [n]_2 \text{ has even length};\\
            0 & \text{otherwise}.
        \end{cases}
    \]
In general then, the family of sequences $T_m$ could perhaps be taken as a family of
\emph{generalized Baum--Sweet sequences}.

\end{document}